\newcommand{\eps}{\varepsilon}
\newcommand{\bN}{\mathbb{N}}
\newcommand{\bR}{\mathbb{R}}
\newcommand{\bC}{\mathbb{C}}
\newcommand{\mcE}{\mathcal{E}}
\newcommand{\fe}{e}
\newcommand{\bx}{\mathbf{x}}
\newcommand{\bu}{{\bf u}}
\newcommand{\bv}{{\bf v}}
\newcommand{\bz}{{\bf z}}
\newcommand{\bee}{{\bf e}}
\newcommand{\bff}{{\bf f}}
\newcommand{\bgg}{{\bf g}}
\newcommand{\be}{\begin{equation}}
\newcommand{\ee}{\end{equation}}
\newcommand{\ba}{\begin{array}}
\newcommand{\ea}{\end{array}}
\newcommand{\bea}{\begin{eqnarray}}
\newcommand{\eea}{\end{eqnarray}}
\newcommand{\beas}{\begin{eqnarray*}}
\newcommand{\eeas}{\end{eqnarray*}}
\newtheorem{remark}{Remark}[section]
\numberwithin{equation}{section}
\title{A uniformly accurate multiscale time integrator pseudospectral method
for the Klein-Gordon equation in the nonrelativistic limit regime\thanks{This
research was supported by the Singapore A*STAR SERC  PSF-Grant 1321202067.}}
\author{Weizhu Bao\thanks{Department of Mathematics,
National University of Singapore, Singapore 119076
 ({\tt matbaowz@nus.edu.sg}, {http://www.math.nus.edu.sg/\~{}bao/};
  {\tt zhxfnus@gmail.com}).} \and Yongyong Cai\thanks{ Department of Mathematics,
Purdue University, West Lafayette, IN 47907, USA (\tt cai99@math.purdue.edu).}\and
Xiaofei Zhao\footnotemark[2]}
\date{}
\begin{document}

\maketitle

\begin{abstract}
We propose and analyze a multiscale time integrator Fourier
pseudospectral (MTI-FP) method  for solving the Klein-Gordon
(KG) equation with a dimensionless parameter $0<\eps\leq1$
which is inversely proportional to the speed of light.
In the nonrelativistic limit regime, i.e. $0<\eps\ll1$,
the solution of the KG equation
propagates waves with amplitude at $O(1)$ and wavelength
at $O(\eps^2)$ in time and $O(1)$ in space,
which causes significantly numerical burdens due to
the high oscillation in time. The MTI-FP method is
designed by adapting a multiscale decomposition by
frequency (MDF) to the solution at each time step
and applying an exponential wave integrator to the nonlinear
Schr\"{o}dinger equation with wave operator under well-prepared
initial data for $\eps^2$-frequency and $O(1)$-amplitude waves
and a KG-type equation with small
initial data for the reminder waves in the MDF. We rigorously
establish two independent error bounds in $H^2$-norm to the MTI-FP method
at $O(h^{m_0}+\tau^2+\eps^2)$ and $O(h^{m_0}+\tau^2/\eps^2)$
with $h$ mesh size, $\tau$ time step
and $m_0\ge2$ an integer depending on the regularity of
the solution,  which immediately
imply that the MTI-FP converges uniformly and optimally in
space with exponential convergence rate
if the solution is smooth, and uniformly in time with linear convergence rate
at $O(\tau)$ for all  $\eps\in(0,1]$ and optimally with
quadratic convergence rate at $O(\tau^2)$
in the regimes when either $\eps=O(1)$ or $0<\eps\le \tau$.
Numerical results are reported to confirm the error bounds and
demonstrate the efficiency and accuracy
of the MTI-FP method for the KG equation, especially in
the nonrelativistic limit regime.
\end{abstract}

\begin{keywords}
Klein-Gordon equation, nonrelativistic limit, multiscale decomposition,
multiscale time integrator, uniformly accurate, meshing strategy,
exponential wave integrator, spectral method
\end{keywords}

\begin{AMS}
65M12, 65M15, 65M70, 81Q05
\end{AMS}

\pagestyle{myheadings}
\thispagestyle{plain}
\markboth{W. BAO AND X. ZHAO}{Multiscale time integrator for Klein-Gordon equation}

\section{Introduction}\label{si}
\setcounter{equation}{0}
In this paper, we consider the dimensionless Klein-Gordon (KG)
equation in $d$ dimensions $(d=1,2,3)$
\cite{Dong,Machihara1,Machihara2,Masmoudi,Sakurai,Faou,Ginibre1,Ginibre2,Strauss,Tao,Reich}:
\begin{equation}\label{KG}
\left\{
  \begin{split}
    & \eps^2\partial_{tt}u-\Delta u+\frac{1}{\eps^2}u
    +f\left(u\right)=0,\quad \mathbf{x}\in\bR^d,\quad t>0,\\
    & u(\mathbf{x},0)= \phi_1(\mathbf{x}),\quad\partial_tu(\mathbf{x},0)
    =\frac{1}{\eps^2}\phi_2(\mathbf{x}),\quad \bx\in\bR^d.
  \end{split}
\right.
\end{equation}
Here $t$ is time, $\mathbf{x}$ is the spatial coordinate,
$u:=u(\mathbf{x},t)$ is a complex-valued
scalar field, $0<\eps\leq1$ is a dimensionless parameter which is
inversely proportional to the speed of light,
 $\phi_1$ and $\phi_2$ are
two given complex-valued initial data which are independent of $\eps$,
and $f(u): \bC\rightarrow\bC$ is a given {\sl gauge invariant} nonlinearity
which is independent of $\eps$  and satisfies \cite{Zhao,Machihara1,Machihara2,Masmoudi,Sakurai,Faou}
\begin{equation}\label{gauge}
f(\fe^{is}u)=\fe^{is}f(u), \qquad \forall s\in \bR.
\end{equation}
We remak that when the initial data $\phi_1(\bx),\phi_2(\bx): \ \bR^d \to\bR$ and $\mathbf{f}(u):\ \bR \to\bR$,
then the solution $u(\bx,t)$ of (\ref{KG}) is real-valued. In this case,
the gauge invariant condition (\ref{gauge}) for the nonlinearity in
(\ref{KG}) is no longer needed. Thus (\ref{KG}) includes the classical KG equation
with the solution $u$ real-valued as a special case \cite{cao,Duncan,Morawetz,Segal,Simon,Strauss,Tao}.

The above KG equation is also known as the relativistic version of the
Schr\"{o}dinger equation used to describe the
dynamics of a spinless particle \cite{Sakurai}. In most applications \cite{Ginibre1,Ginibre2,Glassey1,Glassey2,Machihara2, Masmoudi,Pecher,Faou,Reich},
$f(u)$ is taken as the {\sl pure power} nonlinearity, i.e.
\begin{equation}\label{power}
f(u)=g(|u|^2)u, \ \hbox{with}\  g(\rho)=\lambda \rho^p\ \hbox{for some}
\ \lambda\in \bR, \ p\in\bN_0:=\bN\cup\{0\},
\end{equation}
and then the KG equation (\ref{KG}) conserves the energy \cite{Dong,Ginibre1,Ginibre2,Masmoudi}
\begin{eqnarray}\label{energy}
E(t)&:=&\int_{\bR^d}\left[\eps^2|\partial_t u(\bx,t)|^2+|\nabla u(\bx,t)|^2+\frac{1}{\eps^2}|u(\bx,t)|^2+F(|u(\bx,t)|^2)\right]d \bx\nonumber\\
&\equiv&\int_{\bR^d}\left[\frac{1}{\eps^2}|\phi_2(\bx)|^2+|\nabla \phi_1(\bx)|^2+\frac{1}{\eps^2}|\phi_1(\bx)|^2+F(|\phi_1(\bx)|^2)\right]d \bx=E(0),\ t\ge0,
\end{eqnarray}
where $F(\rho)=\int_0^\rho g(s)\,ds$.

For a fixed $\eps=\eps_0=O(1)$, i.e. O(1)-speed of light regime (e.g. $\eps=1$), the KG equation (\ref{KG})
has been studied extensively in both analytical and numerical aspects,
see \cite{cao,Cohen2,Duncan,Ginibre1,Ginibre2,Glassey1,Glassey2,Jimenez,
Morawetz,Pecher,Segal,Simon,Strauss,Tao}  and references therein.
Recently, more attentions have been devoted to analyzing the solution structure \cite{Machihara1,Machihara2,Masmoudi,Najman,Tsutsumi} and
designing efficient and accurate numerical methods \cite{Dong,Faou} of the problem (\ref{KG})
in the nonrelativistic limit regime, i.e. $0<\eps\ll1$. In fact, due to
that the energy $E(t)=O(\eps^{-2})$ in (\ref{energy}) becomes unbounded when
$\eps\to0$, this brings significant difficulties in the mathematical analysis of
the problem (\ref{KG}) in the nonrelativistic limit regime. Based on
recent analytical results \cite{Machihara1,Machihara2,Masmoudi,Najman,Tsutsumi}, the problem (\ref{KG})
propagates waves with amplitude at $O(1)$ and wavelength
at $O(\eps^2)$ and $O(1)$ in time and space, respectively, when $0<\eps\ll1$.
To illustrate this, Fig. \ref{fig:0} shows the solution of the KG equation (\ref{KG})
with $d=1$, $f(u)=|u|^2u$, $\phi_1(x)=e^{-x^2/2}$ and $\phi_2(x)=\frac{3}{2}\phi_1(x)$ for
different $\eps$.
\begin{figure}
\centerline{\psfig{figure=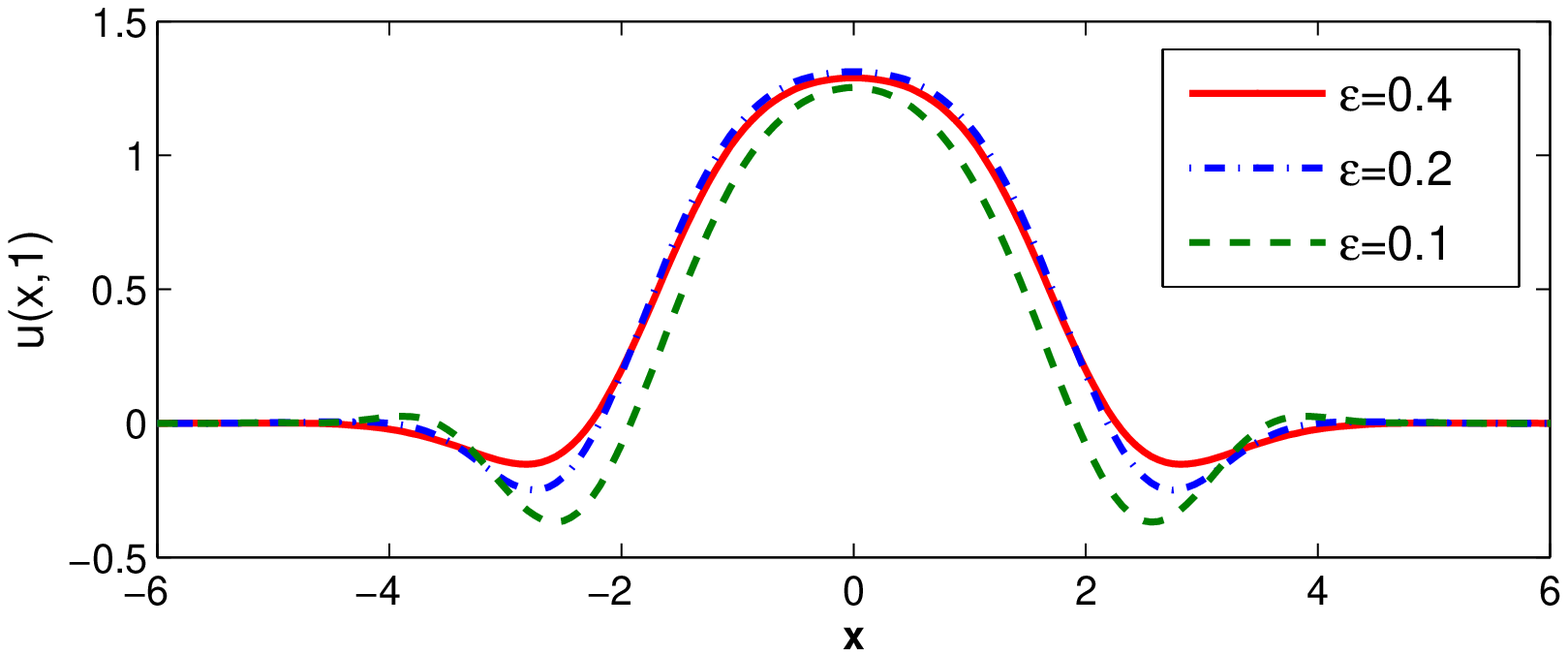,height=6cm,width=14cm}}
\centerline{\psfig{figure=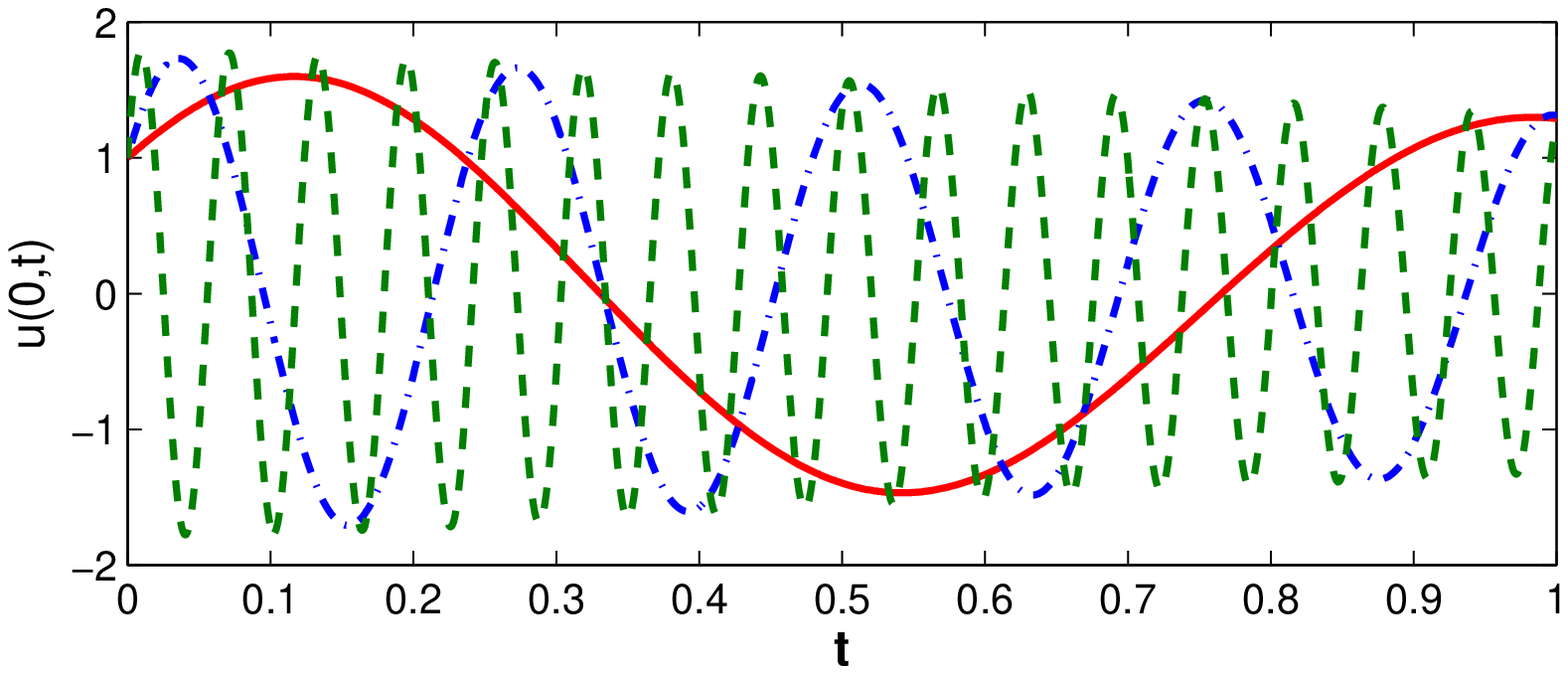,height=6cm,width=14cm}}
\caption{The solution of (\ref{KG}) with $d=1$, $f(u)=|u|^2u$, $\phi_1(x)=e^{-x^2/2}$ and
$\phi_2(x)=\frac{3}{2}\phi_1(x)$ for different $\eps$.}\label{fig:0}
\end{figure}
This highly oscillatory nature of the solution of (\ref{KG})
causes severe burdens in practical computation, making the numerical
approximation of (\ref{KG}) extremely challenging and costly in the regime of
$0<\eps\ll 1$. Different numerical methods, including finite difference
time domain (FDTD) methods \cite{Dong}, exponential wave integrator Fourier pseudospectral (EWI-FP)
method \cite{Dong} and asymptotic preserving (AP) method \cite{Faou} have been proposed and analyzed
as well as compared for solving (\ref{KG}) numerically  in the nonrelativistic limit regime.
In the above numerical study, they paid particular attention on
the resolution of different numerical methods, i.e. meshing strategy requirement
(or $\eps$-scalability) for (\ref{KG}) when $0<\eps\ll1$. Based on their results,
in order to capture `correctly' the oscillatory solution of (\ref{KG})
in practical computations, the frequently used FDTD methods
request mesh size $h=O(1)$ and time step $\tau=O(\eps^3)$
and the EWI-FP methods require $h=O(1)$ and $\tau=O(\eps^2)$, when
$0<\eps\ll1$ \cite{Dong}. Thus the FDTD and EWI-FP methods converges optimally in space and time
for any fixed $\eps=\eps_0=O(1)$, but they do not converge when $\tau=O(\eps)$.  Similarly,
the AP method \cite{Faou} requests $h=O(1)$ and $\tau=O(1)$ when
$0<\eps\ll1$, but it brings $O(1)$-error when $\eps=\eps_0=O(1)$.
Thus all the above numerical methods for the problem (\ref{KG})
do not converge uniformity for $\eps\in(0,1]$ \cite{Dong,Faou}.
Recently, uniformly accurate numerical schemes for high oscillatory Klein-Gordon and nonlinear
Schr\"{o}dinger equations have been proposed and analyzed \cite{Chartier} based on
embedding the problem in a suitable ``two-scale'' reformulation with the
induction of an additional variable and using the Chapman-Enskog expansion to separate
the fast time scale and the slow one. 

Very recently, by using a highly oscillatory second-order ordinary differential equation (ODE)
which has the same oscillatory nature as the problem (\ref{KG}) in time, we proposed and analyzed
two multiscale time integrators (MTIs) based on multiscale decompositions of the solution \cite{Zhao}.
The two MTIs converge uniformly for $\eps\in(0,1]$ and have some advantages compared to
the FDTD and EWI as well as asymptotic preserving methods in integrating
highly oscillatory second-order ODEs for $\eps\in(0,1]$ \cite{Zhao}, especially
when $\eps$ is not too big and too small, i.e. in the intermediate regime.
The aim of this paper is to design and analyze a multiscale time integrator Fourier pseudospectral (MTI-FP)
method for the problem (\ref{KG}) based on a multiscale decomposition
of the solution of (\ref{KG}) \cite{Machihara1,Machihara2,Masmoudi,Najman}
and the MTIs to the highly oscillatory second-order ODEs in \cite{Zhao}. The proposed MTI-FP method to (\ref{KG})
is explicit, efficient and accurate in practical computation,
and converges in time uniformly  at linear convergence rate
for all $\eps\in(0,1]$ and optimally at quadratic convergence rate
in the regimes $\eps=O(1)$ or $0<\eps\leq\tau$. Thus our method
is different with those numerical methods in \cite{Dong,Chartier,Faou}.

The paper is organized as follows. In section \ref{sec: MD},
we introduce a multiscale decomposition for the KG equation (\ref{KG}) based on frequency.
A MTI-FP method is proposed in section \ref{sec: MTI}, and its rigorous error bounds are established
in section \ref{sec: convergence}. Numerical results are reported in section \ref{sec: num results}. Finally, some conclusions are drawn in section \ref{sec: conlusion}. Throughout this paper,
we adopt the standard Sobolev spaces \cite{Adams} and use
the notation $A\lesssim B$ to represent that there exists a generic constant $C>0$,
which is independent of time step $\tau$ (or $n$), mesh size $h$ and $\eps$, such that $|A|\leq CB$.

\section{A multiscale decomposition}\label{sec: MD}

Let $\tau=\Delta t>0$ be the step size, and denote time steps by $t_n=n\tau$ for $n=0,1,\ldots$\;.
In this section, we present a multiscale decomposition
for the solution of (\ref{KG})
on the time interval $[t_n, t_{n+1}]$
with given initial data at $t=t_n$ as
\begin{equation}\label{Initial}
u(\mathbf{x},t_n)=\phi_1^n(\mathbf{x})=O(1),\qquad \partial_tu(\mathbf{x},t_n)=\frac{1}{\eps^2}\phi_2^n(\mathbf{x})=O\left(\frac{1}{\eps^{2}}\right).
\end{equation}
Similarly to the analytical study of the nonrelativistic limit of the nonlinear KG equation
in \cite{Machihara2,Masmoudi}, we take an ansatz to the solution
$u(\mathbf{x},t):=u(\mathbf{x},t_n+s)$ of (\ref{KG}) on the
time interval $[t_n, t_{n+1}]$  with (\ref{Initial}) as \cite{Zhao}
\begin{equation}\label{ansatz}
u(\mathbf{x},t_n+s)=\fe^{is/\eps^2}z_+^n(\mathbf{x},s)+\fe^{-is/\eps^2}\overline{z_-^n}(\mathbf{x},s)
+r^n(\mathbf{x},s), \quad \bx\in\bR^d,\ \ 0\leq s\leq\tau.
\end{equation}
Here and after, $\bar{z}$ denotes the complex conjugate of a complex-valued function $z$.
Differentiating (\ref{ansatz}) with respect to $s$, we have
\bea\label{ansatad}
\partial_s u(\mathbf{x},t_n+s)&=&\fe^{is/\eps^2}\left[\partial_s z_+^n(\mathbf{x},s)+\frac{i}{\eps^2}z_+^n(\mathbf{x},s)\right]+\fe^{-is/\eps^2}
\left[\partial_s\overline{ z_-^n}(\mathbf{x},s)-\frac{i}{\eps^2}\overline{z_-^n}(\mathbf{x},s)\right]\nonumber\\
&&+\partial_sr^n(\mathbf{x},s),\qquad \bx\in\bR^d,\ \ 0\leq s\leq\tau.
\eea
Plugging (\ref{ansatz}) into (\ref{KG}), we get for $\bx\in\bR^d$ and $0\leq s\leq\tau$
\beas
&&\fe^{is/\eps^2}\left[\eps^2\partial_{ss}{z}_+^n(\mathbf{x},s)+2i\partial_sz_+^n(\mathbf{x},s)
-\Delta z_+^n(\mathbf{x},s) \right]+\eps^2\partial_{ss}r^n(\mathbf{x},s)+\Delta r^n(\mathbf{x},s)\nonumber\\
&&+\fe^{-is/\eps^2}\left[\eps^2\partial_{ss}\overline{z_-^n}(\mathbf{x},s)
-2i\partial_s\overline{z_-^n}(\mathbf{x},s)
-\Delta \overline{z_-^n}(\mathbf{x},s)\right]+\frac{r^n(\mathbf{x},s)}{\eps^2}
+f(u(\mathbf{x},t_n+s))=0.
\eeas
Multiplying the above equation by $\fe^{-is/\eps^2}$ and $\fe^{is/\eps^2}$, respectively, we can decompose
it into a coupled system for two $\eps^2$-frequency waves with the unknowns
$z_\pm^n(\bx,s):=z_\pm^n$
and the rest frequency waves with the unknown $r^n(\bx,s):=r^n$ as
\begin{equation}\label{MDF}
\left\{
  \begin{split}
&\eps^2\partial_{ss}z_\pm^n+2i\partial_s z_\pm^n-\Delta z_\pm^n
+f_\pm\left(z_+^n,z_-^n\right)=0, \\
&\eps^2\partial_{ss}r^n-\Delta r^n+\frac{1}{\eps^2}r^n+f_r\left(z_+^n,z_-^n,r^n;s\right)=0,
\end{split}
\right. \qquad \bx\in\bR^d,\quad 0\leq s\leq\tau,
\ee
where
{\small \beas
&&f_\pm\left(z_+,z_{-}\right)=\frac{1}{2\pi}\int_0^{2\pi} f\left(z_\pm+\fe^{i\theta}\overline{z_\mp}\right)
d\theta, \qquad z_\pm,r\in {\mathbb C}, \qquad 0\le s\le \tau,\\
&&f_r\left(z_+,z_-,r;s\right)=f\left(\fe^{is/\eps^2}z_++\fe^{-is/\eps^2}
\overline{z_-}+r\right)-f_+\left(z_+,z_-\right)\fe^{is/\eps^2}-\overline{f_-}\left(z_+,z_-\right)\fe^{-is/\eps^2}.
\eeas }
In order to find proper initial conditions for the above system (\ref{MDF}),
setting $s=0$ in (\ref{ansatz}) and (\ref{ansatad}),
noticing (\ref{Initial}), we obtain
\begin{equation}\label{init123}
\left\{
\begin{split}
 &z_+^n(\bx,0)+\overline{z_-^n}(\bx,0)+r^n(\bx,0)=\phi_1^n(\bx),\quad \bx\in\bR^d,\\
 &\frac{i}{\eps^2}\left[z_+^n(\bx,0)-\overline{z_-^n}(\bx,0)\right]+\partial_s z_+^n(\bx,0)+\partial_s\overline{z_-^n}(\bx,0)
 +\partial_sr^n(\bx,0)=\frac{\phi_2^n(\bx)}{\eps^2}.
\end{split}
\right.
\end{equation}
Now we decompose the above initial data so as to: (i) equate $O\left(\frac{1}{\eps^2}\right)$ and $O(1)$ terms
in the second equation of (\ref{init123}), respectively, and (ii) be well-prepared for the first two equations  in (\ref{MDF})
when $0<\eps\ll 1$, i.e. $\partial_sz_+^n(\bx,0)$ and $\partial_sz_-^n(\bx,0)$ are determined
from the first two equations in (\ref{MDF}), respectively, by setting $\eps=0$ and $s=0$ \cite{Cai1,Cai2}:
\begin{equation}\label{FSW-i1}
\left\{
  \begin{split}
&z_+^n(\bx,0)+\overline{z_-^n}(\bx,0)=\phi_1^n(\bx),\qquad i\left[z_+^n(\bx,0)-\overline{z_-^n}(\bx,0)\right]=\phi_2^n(\bx),\\
&2i\partial_sz_\pm^n(\bx,0)-\Delta z_\pm^n(\bx,0)+f_\pm\left(z_+^n(\bx,0),z_-^n(\bx,0)\right)=0,
\qquad \qquad \quad \bx\in\bR^d,\\
&r^n(\bx,0)=0, \qquad \partial_sr^n(\bx,0)+\partial_sz_+^n(\bx,0)+\partial_s\overline{z_-^n}(\bx,0)=0.
\end{split}
  \right.
\end{equation}
Solving (\ref{FSW-i1}), we get the initial data for (\ref{MDF}) as
\begin{equation}\label{FSW-i21}
\left\{
  \begin{split}
&z_+^n(\bx,0)=\frac{1}{2}\left[\phi_1^n(\bx)-i\phi_2^n(\bx)\right], \qquad  z_-^n(\bx,0)=\frac{1}{2}\left[\overline{\phi_1^n}(\bx)-i\ \overline{\phi_2^n}(\bx)\right],\\
&\partial_sz_\pm^n(\bx,0)=\frac{i}{2}\left[-\Delta z_\pm^n(\bx,0)+
f_\pm\left(z_+^n(\bx,0),z_-^n(\bx,0)\right)\right],\qquad \qquad\bx\in\bR^d,\\
&r^n(\bx,0)=0, \qquad \partial_sr^n(\bx,0)=-\partial_sz_+^n(\bx,0)-\partial_s\overline{z_-^n}(\bx,0).
\end{split}
  \right.
\end{equation}
The above decomposition (\ref{ansatz}) can be called as multiscale decomposition by frequency (MDF).
In fact, it can also be regarded as to decompose slow waves at $\eps^2$-wavelength and fast waves at other wavelengths,
thus it can also be called as fast-slow frequency decomposition (FSFD). On the other hand,
the amplitude of $z_\pm^n$ is usually at $O(1)$ and the amplitude of $r^n$ is at $O(\eps^2)=o(1)$
when $\eps$ is small, thus it can also be regarded as large-small amplitude decomposition (LSAD).
Specifically, for the pure power nonlinearity, i.e. $f$ satisfies (\ref{power}),
explicit formulas for $f_\pm$ and $f_r$ have been given in \cite{Zhao}.

After solving the decomposed system (\ref{MDF})  with the initial data (\ref{FSW-i21}), we get $z_\pm^n(\bx,\tau)$, $\partial_s z_\pm^n(\bx,\tau)$, $r^n(\bx,\tau)$ and $\partial_s r^n(\bx,\tau)$.
Then we can reconstruct the solution to (\ref{KG}) at $t=t_{n+1}$ by setting $s=\tau$ in (\ref{ansatz}) and (\ref{ansatad}), i.e.,
\begin{equation}\label{u n+1}
\left\{
\begin{split}
&u(\bx,t_{n+1})=\fe^{i\tau/\eps^2}z_+^n(\bx,\tau)+\fe^{-i\tau/\eps^2}\overline{z_-^n}(\bx,\tau)
+r^n(\bx,\tau):=\phi_1^{n+1}(\bx),\\
&\partial_t u(\bx,t_{n+1})=\frac{1}{\eps^2}\phi_2^{n+1}(\bx),\qquad\bx\in\bR^d,\\
\end{split}\right.
\end{equation}
with
{\small
\begin{equation*}
\phi_2^{n+1}(\bx)=\fe^{i\tau/\eps^2}\left[\eps^2\partial_sz_+^n(\bx,\tau)+i z_+^n(\bx,\tau)\right]+\fe^{-i\tau/\eps^2}
\left[\eps^2\partial_s\overline{z_-^n}(\bx,\tau)-i\overline{z_-^n}(\bx,\tau)\right]
+\eps^2\partial_sr^n(\bx,\tau).
\end{equation*}}

\section{A MTI-FP method}\label{sec: MTI}

In this section, based on the MDF (\ref{MDF}), we propose a new numerical method for
solving the KG equation (\ref{KG}) with the pure power nonlinearity (\ref{power}),
which is uniformly accurate for $\eps\in(0,1]$.
For the simplicity of notations, we present the numerical method
in one space dimension (1D) with a cubic nonlinearity, i.e. $d=1$ in (\ref{KG}) and
$f(u)=\lambda|u|^{2}u$ with $\lambda\in\bR$ a given constant in (\ref{power}).
In this case, we have
\begin{equation}\label{fr cubic}
\left\{
\begin{split}
&f_\pm\left(z_+,z_{-}\right)=\lambda\left(|z_\pm|^2+2|z_\mp|^2\right)z_\pm,
\qquad z_\pm,r\in {\mathbb C}, \qquad 0\le s\le \tau,\\
&f_r\left(z_+,z_-,r;s\right)=\fe^{3is/\eps^2}g_+(z_+,z_-)+\fe^{-3is/\eps^2}
\overline{g_-}(z_+,z_-)+w(z_+,z_-,r;s), \\
\end{split}\right.
\end{equation}
with
\begin{equation}\label{g_pm cubic}
\left\{
\begin{split}
 &g_\pm(z_+,z_-)=\lambda\,{z^2_\pm}\,z_\mp,\qquad z_\pm,r\in {\mathbb C}, \qquad 0\le s\le \tau,\\
 &w(z_+,z_-,r;s)=f\left(\fe^{is/\eps^2}z_++\fe^{-is/\eps^2}\overline{z_-}+r\right)
 -f\left(\fe^{is/\eps^2}z_++\fe^{-is/\eps^2}\overline{z_-}\right).\\
 \end{split}\right.
\end{equation}
Generalizations to higher dimensions and general pure power  nonlinearity  are straightforward and all the results presented in this paper are still valid with minor modifications. Due to fast decay of the
solution to the KG equation (\ref{KG}) at far field, similar to those in the literature for
numerical computations \cite{Dong,cao,Cohen2,Duncan,Faou,Jimenez,Strauss},
the whole space problem (\ref{KG}) in 1D is usually truncated
onto a finite interval $\Omega=(a,b)$  with periodic boundary conditions
($a$ and $b$ are usually chosen sufficient large such that
the truncation error is negligible):
\begin{equation}\label{KG trun}
\left\{
\begin{split}
& \eps^2\partial_{tt} u(x,t)-\partial_{xx} u(x,t)+\frac{u(x,t)}{\eps^2}
+f\left(u(x,t)\right)=0,\quad x\in\Omega=(a,b),\ t>0,\\
& u(a,t)=u(b,t),\quad \partial_x u(a,t)=\partial_x u(b,t), \qquad t\geq0,\\
& u(x,0)= \phi_1(x),\quad\partial_tu(x,0)=\frac{1}{\eps^2}\phi_2(x),\qquad x\in\overline{\Omega}=[a,b].
  \end{split}
\right.
\end{equation}
Consequently, for $n\ge0$, the decomposed system MDF (\ref{MDF}) in 1D collapses to
\begin{equation}\label{MDF trun}
\left\{
\begin{split}
&\eps^2\partial_{ss}z_\pm^n+2i\partial_s z_\pm^n-\partial_{xx}z_\pm^n+f_\pm\left(z_+^n,z_-^n\right)=0, \\
&\eps^2\partial_{ss}r^n-\partial_{xx} r^n+\frac{1}{\eps^2}r^n+f_r\left(z_+^n,z_-^n,r^n;s\right)=0,  \quad
a<x<b, \ 0<s\leq\tau.
\end{split}
\right.
\end{equation}
The initial and boundary conditions for the above system are
\begin{equation}\label{MDF ini}
\left\{
\begin{split}
&z_\pm^n(a,s)=z_\pm^n(b,s),\qquad \partial_x z_\pm^n(a,s)=\partial_x z_\pm^n(b,s), \\
&r^n(a,s)=r^n(b,s),\qquad \ \partial_x r^n(a,s)=\partial_x r^n(b,s), \qquad 0\leq s\leq\tau;\\
&z_+^n(x,0)=\frac{1}{2}\left[\phi_1^n(x)-i\phi_2^n(x)\right], \quad z_-^n(x,0)=\frac{1}{2}\left[\overline{\phi_1^n}(x)-i\,\overline{\phi_2^n}(x)\right],\\
&\partial_s z_\pm^n(x,0)=\frac{i}{2}\left[-\partial_{xx} z_\pm^n(x,0)
+f_\pm\left(z_+^n(x,0),z_-^n(x,0)\right)\right], \qquad a\le x\le b,\\
&r^n(x,0)=0, \qquad \partial_s r^n(x,0)=-\partial_s z_+^n(x,0)-\partial_s\,\overline{z_-^n}(x,0).
\end{split}
\right.
\end{equation}

In order to discretize (\ref{MDF trun}) with (\ref{MDF ini}), we first apply the Fourier
spectral method in space and then use the exponential wave integrator (EWI)  for
time integration \cite{Zhao}. Choose the mesh size $h:=\Delta x=(b-a)/N$ with $N$ a positive integer
 and denote grid points as
$x_j:=a+jh$ for  $j=0,1,\ldots, N$. Define
\beas
&&X_N:=\mbox{span}\left\{\phi_l(x)=\fe^{i\mu_l(x-a)}\ |\ l=-\frac{N}{2}, \ldots, \frac{N}{2}-1\right\}
\quad \hbox{with}\ \mu_l=\frac{2\pi l}{b-a}, \\
&&Y_N:=\left\{\bv=(v_0,v_1,\ldots,v_N)\in\bC^{N+1} \; |\; v_0=v_N\right\}\quad \hbox{with} \
 \|\bv\|_{l^2}=h\sum_{j=0}^{N-1}|v_j|^2.
\eeas
For a periodic function $v(x)$ on $\overline{\Omega}$ and a vector $\bv\in Y_N$,
let $P_N: L^2(\Omega)\rightarrow X_N$ be the standard $L^2$-projection operator,
 and $I_N: C(\Omega)\rightarrow X_N$ or  $Y_N \rightarrow X_N$
be the trigonometric interpolation operator \cite{Shen}, i.e.
\begin{equation}\label{project oper}
(P_Nv)(x)=\sum_{l=-N/2}^{N/2-1}\widehat{v}_l\;\fe^{i\mu_l(x-a)},\quad (I_N\bv)(x)=\sum_{l=-N/2}^{N/2-1}\widetilde{v}_l\;\fe^{i\mu_l(x-a)},\quad a\leq x\leq b,
\end{equation}
where $\widehat{v}_l$ and $\widetilde{v}_l$ are the Fourier and discrete Fourier transform
coefficients of the periodic function $v(x)$ and vector $\bv$, respectively, defined as
\begin{equation}\label{Fourier tans}
\widehat{v}_l=\frac{1}{b-a}\int_a^b v(x)\;\fe^{-i\mu_l(x-a)}dx,\qquad \widetilde{v}_l=\frac{1}{N}\sum_{j=0}^{N-1}v_j\;\fe^{-i\mu_l(x_j-a)}.
\end{equation}

Then a Fourier spectral method for discretizing (\ref{MDF trun}) reads:\\
Find $z_{\pm,N}^n:=z_{\pm,N}^n(x,s),\ r^n_N:=r^n_N(x,s)\in X_N$ for $0\le s\leq\tau$, i.e.
\begin{equation} \label{z r PN}
z_{\pm,N}^n(x,s)=\sum_{l=-N/2}^{N/2-1}\widehat{(z_\pm^n)}_l(s)\fe^{i\mu_l(x-a)},\quad r_N^n(x,s)=\sum_{l=-N/2}^{N/2-1}\widehat{(r^n)}_l(s)\fe^{i\mu_l(x-a)},
\end{equation}
such that for $0< s< \tau$
\begin{equation}\label{MDF PN}
\left\{
\begin{split}
&\eps^2\partial_{ss}z_{\pm,N}^n+2i\partial_s z_{\pm,N}^n-\partial_{xx}z_{\pm,N}^n+P_Nf_\pm\left(z_{+,N}^n,z_{-,N}^n\right)=0,\quad a< x< b,\\
&\eps^2\partial_{ss}r_N^n-\partial_{xx}r_N^n+\frac{1}{\eps^2}r_N^n+P_Nf_r\left(z_{+,N}^n,
z_{-,N}^n,r_N^n;s\right)=0.
\end{split}
\right.
\end{equation}
Substituting (\ref{z r PN}) into (\ref{MDF PN}) and noticing the orthogonality of $\phi_l(x)$, we get
\begin{equation}\label{MDF l}
\left\{
\begin{split}
&\eps^2\widehat{(z_{\pm}^n)}_l''(s)+2i\widehat{(z_\pm^n)}_l'(s)+\mu_l^2\widehat{(z_\pm^n)}_l(s)
+\widehat{(f_\pm^n)}_l(s)=0,\quad 0<s\leq\tau,\\
&\eps^2\widehat{(r^n)}_l''(s)+\left(\mu_l^2+\frac{1}
{\eps^2}\right)\widehat{(r^n)}_l(s)+\widehat{(f_r^n)}_l(s)=0,\quad -\frac{N}{2}\le l\le \frac{N}{2}-1,
\end{split}
\right.
\end{equation}
where $\widehat{(f_\pm^n)}_l(s)$ and $\widehat{(f_r^n)}_l(s)$ are the Fourier coefficients of $f_\pm^n(x,s):=f_\pm(z_{+,N}^n(x,s)$,  $z_{-,N}^n(x,s))$ and
$f_r^n(x,s):=f_r\left(z_{+,N}^n(x,s),z_{-,N}^n(x,s),r_N^n(x,s);s\right)$, respectively.
In order to apply the EWIs for integrating (\ref{MDF l}) in time, for each fixed $-N/2\le l\le N/2-1$,
we re-write (\ref{MDF l}) by using  the variation-of-constant formulas
\begin{equation}\label{VCF}
\left\{
\begin{split}
&\widehat{(z_\pm^n)}_l(s)=a_l(s)\widehat{(z_\pm^n)}_l(0)+\eps^2b_l(s)\widehat{(z_\pm^n)}_l'(0)
-\int_0^{s}b_l(s-\theta)\widehat{(f_\pm^n)}_l(\theta)\, d\theta,\\
&\widehat{(r^n)}_l(s)=\frac{\sin(\omega_l s)}{\omega_l}\widehat{(r^n)}_l'(0)
-\int_0^s\frac{\sin\left(\omega_l(s-\theta)\right)}{\eps^2\omega_l}\widehat{(f_r^n)}_l(\theta)\,d\theta,
\quad 0\leq s\leq \tau,
\end{split}
\right.
\end{equation}
where $\omega_l=\frac{1}{\eps^2}\sqrt{1+\mu_l^2\eps^2}$ and 
\begin{equation}\label{nd pm def}
\left\{
\begin{split}
&a_l(s):=\frac{\lambda^+_l\fe^{is\lambda^-_l}-\lambda^-_l\fe^{is\lambda^+_l}}{\lambda^+_l-\lambda^-_l},\quad\ \ b_l(s):=i\frac{\fe^{is\lambda^+_l}-
\fe^{is\lambda^-_l}}{\eps^2(\lambda^-_l-\lambda^+_l)}, \quad 0\le s\le \tau,\\
&\lambda^+_l=-\frac{1+\sqrt{1+\mu_l^2\eps^2}}{\eps^2}=O\left(\frac{1}{\eps^2}\right),\quad
\lambda^-_l=-\frac{1-\sqrt{1+\mu_l^2\eps^2}}{\eps^2}=O\left(1\right).
\end{split}
\right.
\end{equation}
Differentiating (\ref{VCF}) with respect to $s$, we obtain
\begin{equation}\label{dz dr}
\left\{
\begin{split}
&\widehat{(z_\pm^n)}_l'(s)=a_l'(s)\widehat{(z_\pm^n)}_l(0)+\eps^2b_l'(s)\widehat{(z_\pm^n)}_l'(0)-\int_0^s b_l'(s-\theta)\widehat{(f_\pm^n)}_l(\theta)\, d\theta,\\
&\widehat{(r^n)}_l'(s)=\cos(\omega_l s)\widehat{(r^n)}_l'(0)-\int_0^s\frac{\cos\left(\omega_l(s-\theta)\right)}{\eps^2}
\widehat{(f_r^n)}_l(\theta)\,d\theta,\quad 0\leq s\leq \tau,
\end{split}
\right.
\end{equation}
where
\begin{equation}
a_l'(s)=i\lambda^+_l\lambda^-_l\frac{\fe^{is\lambda^-_l}-\fe^{is\lambda^+_l}}{\lambda^+_l-\lambda^-_l},\quad\ b_l'(s)=\frac{\lambda^+_l\fe^{is\lambda^+_l}-
\lambda^-_l\fe^{is\lambda^-_l}}{\eps^2(\lambda^+_l-\lambda^-_l)}, \quad\ 0\le s\le \tau.
\end{equation}
Taking $s=\tau$ in (\ref{VCF}) and  (\ref{dz dr}), noticing (\ref{fr cubic}) and (\ref{g_pm cubic}),
and approximating the integrals
either by the Gautschi's type quadrature \cite{Zhao,Gaustchi,Lubich} or by the
standard trapezoidal rule \cite{Zhao,Gaustchi,Lubich}, we get
\begin{equation}\label{z r app}
\left\{
\begin{split}
&\widehat{(z_\pm^n)}_l(\tau)\approx a_l(\tau)\widehat{(z_\pm^n)}_l(0)+\eps^2b_l(\tau)\widehat{(z_\pm^n)}_l'(0)
-c_l(\tau)\widehat{(f_\pm^n)}_l(0)-d_l(\tau)\widehat{(f_\pm^n)}_l'(0),\\
&\widehat{(r^n)}_l(\tau)\approx\frac{\sin(\omega_l \tau)}{\omega_l}\widehat{(r^n)}_l'(0)
-p_l(\tau)\widehat{(g_+^n)}_l(0)-q_l(\tau)\widehat{(g_+^n)}_l'(0)\\
&\qquad \qquad \quad -\overline{p_l}(\tau)\widehat{\left(\overline{g_-^n}\right)}_l(0)
-\overline{q_l}(\tau)\widehat{\left(\overline{g_-^n}\right)}_l'(0),\\
&\widehat{(z_\pm^n)}_l'(\tau)\approx a_l'(\tau)\widehat{(z_\pm^n)}_l(0)+\eps^2b_l'(\tau)\widehat{(z_\pm^n)}_l'(0)
-c_l'(\tau)\widehat{(f_\pm^n)}_l(0)-d_l'(\tau)\widehat{(f_\pm^n)}_l'(0),\\
&\widehat{(r^n)}_l'(\tau)\approx\cos(\omega_l \tau)\widehat{(r^n)}_l'(0)-p_l'(\tau)\widehat{(g_+^n)}_l(0)-q_l'(\tau)\widehat{(g_+^n)}_l'(0)
-\overline{p_l'}(\tau)\widehat{\left(\overline{g_-^n}\right)}_l(0)\\
&\qquad\quad\quad\ \ -\overline{q_l'}(\tau)\widehat{\left(\overline{g_-^n}\right)}_l'(0)-\frac{\tau}{2\eps^2}
\widehat{(w^n)}_l(\tau),
\end{split}
\right.
\end{equation}
where $\widehat{(g_\pm^n)}_l(s)$, $\widehat{(w^n)}_l(s)$,
$\widehat{(f_\pm^n)}_l'(s)=\widehat{(\partial_sf_\pm^n)}_l(s)$ and $\widehat{(g_\pm^n)}_l'(s)=\widehat{(\partial_sg_\pm^n)}_l(s)$ are the Fourier coefficients of
$g_\pm^n=g_\pm\left(z_{+,N}^n,z_{-,N}^n\right)$,
$w^n=w\left(z_{+,N}^n,z_{-,N}^n,r_N^n;s\right)$,
$\partial_sf_\pm^n=2\lambda z_\pm^n\; {\rm Re} \left[\overline{z_\pm^n}\partial_s z_\pm^n+
2\overline{z_\mp^n}\partial_s z_\mp^n\right]+\lambda\partial_s z_\pm^n
\left[|z_\pm^n|^2+2|z_\mp^n|^2\right]=
:\dot{f}_\pm\left(z_+^n,z_-^n;\partial_sz_+^n,\partial_sz_-^n\right)$ and
$\partial_sg_\pm^n=2\lambda z_\pm^nz_\mp^n\partial_sz_\pm^n+\lambda \left(z_\pm^n\right)^2\partial_sz_\mp^n
=:\dot{g}_\pm\left(z_+^n,z_-^n;\partial_sz_+^n,\partial_sz_-^n\right)$,
respectively,
and (their detailed explicit formulas are shown in \cite[appendix]{Zhao})
\begin{equation}\label{cldl765}
\left\{\begin{split}
&c_l(\tau)=\int_0^{\tau}b_l(\tau-\theta)\, d\theta,\qquad p_l(\tau)=\int_0^\tau\frac{\sin\left(\omega_l(\tau-\theta)\right)}{\eps^2\omega_l}\fe^{3i\theta/\eps^2}\,d\theta,\\
&d_l(\tau)=\int_0^{\tau}b_l(\tau-\theta)\theta\, d\theta,\qquad
q_l(\tau)=\int_0^\tau\frac{\sin\left(\omega_l(\tau-\theta)\right)}{\eps^2\omega_l}\fe^{3i\theta/\eps^2}\theta\,d\theta,\\
&c_l'(\tau)=\int_0^{\tau}b_l'(\tau-\theta)\, d\theta,\qquad p_l'(\tau)=\int_0^\tau\frac{\cos\left(\omega_l(\tau-\theta)\right)}{\eps^2}\fe^{3i\theta/\eps^2}\,d\theta,\\
&d_l'(\tau)=\int_0^{\tau}b_l'(\tau-\theta)\theta\, d\theta,\qquad
q_l'(\tau)=\int_0^\tau\frac{\cos\left(\omega_l(\tau-\theta)\right)}{\eps^2}\fe^{3i\theta/\eps^2}\theta\,d\theta.
\end{split}\right.
\end{equation}
Inserting (\ref{z r app}) into (\ref{z r PN}) and its time derivative with setting $s=\tau$, and
noticing (\ref{u n+1}), we immediately obtain a {\sl  MTI-FP} discretization for the problem (\ref{KG trun}).

In practice, the integrals for computing the Fourier transform
coefficients in (\ref{Fourier tans}), (\ref{VCF}) and (\ref{dz dr})
are usually approximated by numerical
 quadratures \cite{Shen,Dong,Cai2}.  Let $u_j^n$ and $\dot{u}_j^n$ be approximations of $u(x_j,t_n)$ and $\partial_tu(x_j,t_n)$,
 respectively; and $z_{\pm,j}^{n+1},$ $\dot{z}_{\pm,j}^{n+1},$ $r_j^{n+1}$ and $\dot{r}_j^{n+1}$ be approximations of $z_\pm^n(x_j,\tau),$ $\partial_sz_\pm^n(x_j,\tau),$ $r^n(x_j,\tau)$ and $\partial_s r^n(x_j,\tau)$, respectively,
 for $j=0,\ldots,N$. Choosing $u_j^0=\phi_1(x_j)$ and $\dot{u}_j^0=\phi_2(x_j)/\eps^2$ for
 $0\le j\le N$ and noticing (\ref{u n+1}), (\ref{z r PN}) with $s=\tau$, (\ref{z r app}),
 (\ref{MDF ini}) and (\ref{Initial}),
 then a MTI-FP discretization for the problem (\ref{KG trun}) reads  for $n\ge0$
\begin{equation}\label{MTI-FP S}
\left\{
\begin{split}
&u^{n+1}_j=\fe^{i\tau/\eps^2}z_{+,j}^{n+1}+\fe^{-i\tau/\eps^2}
\overline{z_{-,j}^{n+1}}+r^{n+1}_j,\qquad j=0,1,\ldots,N,\\
&\dot{u}^{n+1}_j=\fe^{i\tau/\eps^2}\left(\dot{z}_{+,j}^{n+1}+\frac{i}{\eps^2}z_{+,j}^{n+1}\right)
+\fe^{-i\tau/\eps^2}\left(\overline{\dot{z}_{-,j}^{n+1}}-\frac{i}{\eps^2}
\overline{z_{-,j}^{n+1}}\right)+\dot{r}^{n+1}_j,
\end{split}
\right.
\end{equation}
where
\begin{equation}\label{zpm4567}
\left\{
\begin{split}
&z_{\pm,j}^{n+1}=\sum_{l=-N/2}^{N/2-1}\widetilde{(z_{\pm}^{n+1})}_l\fe^{i\mu_l(x_j-a)},\qquad
r_{j}^{n+1}=\sum_{l=-N/2}^{N/2-1}\widetilde{(r^{n+1})}_l\fe^{i\mu_l(x_j-a)},\\
&\dot{z}_{\pm,j}^{n+1}=\sum_{l=-N/2}^{N/2-1}\widetilde{(\dot{z}_{\pm}^{n+1})}_l\fe^{i\mu_l(x_j-a)},\qquad
\dot{r}_{j}^{n+1}=\sum_{l=-N/2}^{N/2-1}\widetilde{(\dot{r}^{n+1})}_l\fe^{i\mu_l(x_j-a)},
\end{split}
\right.
\end{equation}
with
\begin{equation} \label{Fourier psu coeff}
\left\{
\begin{split}
&\widetilde{(z_\pm^{n+1})}_l=a_l(\tau)\widetilde{(z_\pm^{0})}_l+\eps^2b_l(\tau)\widetilde{(\dot{z}_\pm^{0})}_l
-c_l(\tau)\widetilde{(f_\pm^0)}_l-d_l(\tau)\widetilde{(\dot{f}_\pm^0)}_l,\\
&\widetilde{(\dot{z}_\pm^{n+1})}_l= a_l'(\tau)\widetilde{(z_\pm^0)}_l+\eps^2b_l'(\tau)\widetilde{(\dot{z}_\pm^0)}_l
-c_l'(\tau)\widetilde{(f_\pm^0)}_l-d_l'(\tau)\widetilde{(\dot{f}_\pm^0)}_l,\\
&\widetilde{(r^{n+1})}_l=\frac{\sin(\omega_l \tau)}{\omega_l}\widetilde{(\dot{r}^{0})}_l
-p_l(\tau)\widetilde{(g_+^0)}_l-q_l(\tau)\widetilde{(\dot{g}_+^0)}_l-\overline{p_l}(\tau)
\widetilde{\left(\overline{g_-^0}\right)}_l
-\overline{q_l}(\tau)\widetilde{\left(\overline{\dot{g}_-^0}\right)}_l,\\
&\widetilde{(\dot{r}^{n+1})}_l=\cos(\omega_l \tau)\widetilde{(\dot{r}^0)}_l-p_l'(\tau)\widetilde{(g_+^0)}-q_l'(\tau)\widetilde{(\dot{g}_+^0)}_l
-\overline{p_l'}(\tau)\widetilde{\left(\overline{g_-^0}\right)}_l\\
&\qquad \qquad -\overline{q_l'}(\tau)
\widetilde{\left(\overline{\dot{g}_-^0}\right)}_l -\frac{\tau}{2\eps^2}\widetilde{(w^{n+1})}_l,
\quad -\frac{N}{2}\le l\le \frac{N}{2}-1,
\end{split}
\right.
\end{equation}
and
\begin{equation} \label{MTI-FP E}
\left\{
\begin{split}
&\widetilde{(z_{+}^{0})}_l=\frac{1}{2}\left[\widetilde{(u^n)}_l-i\eps^2\widetilde{(\dot{u}^n)}_l\right],\quad \widetilde{(z_{-}^{0})}_l=\frac{1}{2}\left[\widetilde{(\overline{u^n})}_l-i\eps^2\widetilde{\left(\overline{\dot{u}^n}\right)}_l\right],\\
&\widetilde{(\dot{z}_{\pm}^0)}_l=\frac{i}{2}\left[\frac{2}{\tau}\sin\left(\frac{1}{2}\mu_l^2\tau\right)\widetilde{(z_\pm^0)}_l+\widetilde{(f_{\pm}^0)}_l\right],\quad \widetilde{(\dot{r}^0)}_l=-\widetilde{(\dot{z}_{+}^0)}_l-\widetilde{\left(\overline{\dot{z}_{-}^0}\right)}_l;\\
&f_{\pm,j}^0=f_\pm\left(z_{+,j}^0,z_{-,j}^0\right),\quad \dot{f}_{\pm,j}^0=\dot{f}_{\pm}\left(z_{+,j}^0,z_{-,j}^0;\dot{z}_{+,j}^0,\dot{z}_{-,j}^0\right),\\
&g_{\pm,j}^0=g_\pm\left(z_{+,j}^0,z_{-,j}^0\right),\quad \dot{g}_{\pm,j}^0=\dot{g}_\pm\left(z_{+,j}^0,z_{-,j}^0;\dot{z}_{+,j}^0,\dot{z}_{-,j}^0\right),\\
&w_j^{n+1}=f\left(u_j^{n+1}\right)-f\left(\fe^{i\tau/\eps^2}z_{+,j}^{n+1}
+\fe^{-i\tau/\eps^2}\overline{z_{-,j}^{n+1}}\right),\qquad 0
\le j\le N.
\end{split}
\right.
\end{equation}
This MTI-FP method for the KG equation (\ref{KG trun}) (or (\ref{KG})) is explicit, accurate,
easy to implement and very efficient due to the fast Fourier transform (FFT), and its memory
cost is $O(N)$ and the computational cost per time step is $O(N \log N )$.
\begin{remark}
Instead of discretizing the initial velocity $\partial_sz_\pm^n(x,0)$ from (\ref{MDF ini}) in Fourier space as $\widetilde{(z_\pm^n)}_l'(0)=\frac{i}{2}[\mu_l^2\widetilde{(z_\pm^n)}_l(0)+\widetilde{(f_{\pm}^n)}_l(0)]$ which will result a second order decreasing in the spatial accuracy, we change to the modified coefficients given in (\ref{MTI-FP E}) as filters where the accuracy is now controlled by the time step $\tau$ (cf. \eqref{eq:iv-err} ).
There are other possible choices of the filters.
\end{remark}

\begin{remark}
When the initial data $\phi_1(x)$ and $\phi_2(x)$ are real-valued functions
and $f(u): \bR\to\bR$ in (\ref{KG}), then the solution $u(x,t)$ is real-valued.
In this case, for $n\ge0$, it is easy to see that $z_+^n(x,s)=z_-^n(x,s)$ for $x\in\Omega$ and
 $0\le s\le \tau$ in the MDF (\ref{MDF}). In the corresponding numerical scheme, we have
$z_{+,j}^n=z_{-,j}^n$ for $j=0,\ldots,N$ in the MTI-FP (\ref{MTI-FP S}). Thus the scheme can be simplified and the computational cost can be reduced.
\end{remark}

\section{Uniform convergence of MTI-FP} \label{sec: convergence}
In this section, we establish an error bound for the MTI-FP (\ref{MTI-FP S}) of the problem (\ref{KG trun}),
 which is uniformly for $\eps\in(0,1]$.  Let $0<T< T^*$ with $T^*$ the maximum
existence time of the solution $u$ to the problem (\ref{KG trun}), motivated by the analytical results in \cite{Machihara1,Machihara2,Masmoudi}, here we make the following assumption on the
solution $u$ to the problem (\ref{KG trun}) --- there exists an integer $m_0\ge2$ such that
\[({\rm A}) \quad u\in C^1\left([0,T];H_p^{m_0+4}(\Omega)\right),
\quad \left\|u\right\|_{L^\infty([0,T]; H^{m_0+4})}+\eps^2
\left\|\partial_t u\right\|_{L^\infty([0,T]; H^{m_0+4})}\lesssim 1,
\]
where $H_p^{m}(\Omega)=\left\{\phi(x)\in H^m(\Omega)\ |\ \phi^{(k)}(a)=\phi^{(k)}(b),\
k=0,1,\ldots,m-1\right\}\subset H^m(\Omega)$. Denote
\begin{equation}
C_0=\max_{0<\eps\le 1}\left\{\left\|u\right\|_{L^\infty([0,T];H^{m_0+4})},
\ \eps^2\left\|\partial_tu\right\|_{L^\infty([0,T];H^{m_0+4})}\right\}.
\end{equation}

Let $\bu^n=(u_0^n,u_1^n,\ldots,u_{N}^n)\in {\mathbb C}^{N+1}$,
$\dot{\bu}^n=(\dot{u}_0^n,\dot{u}_1^n,\ldots,\dot{u}_{N}^n)\in {\mathbb C}^{N+1}$ ($n\ge0$)
be the numerical solution obtained from the MTI-FP method (\ref{MTI-FP S}), denote their
interpolations as
\begin{equation} \label{uI854}
u_I^n(x):=(I_N\bu^n)(x),\qquad \dot{u}_I^n(x):=(I_N\dot{\bu}^n)(x),\qquad x\in\overline{\Omega},
\end{equation}
and define the error functions as
\begin{equation}\label{error fun}
e^n(x):=u(x,t_n)-u_I^n(x),\quad \dot{e}^n(x):=\partial_tu(x,t_n)-\dot{u}_I^n(x),\quad x\in\overline{\Omega},\ 0\leq n\leq\frac{T}{\tau},
\end{equation}
then we have the following error estimates for the MTI-FP method (\ref{MTI-FP S}).

\begin{theorem}[Error bounds of MTI-FP]\label{main thm}
Under the assumption (A), there exist two constants $0<h_0\leq1$ and $0<\tau_0\leq1$
sufficiently small and independent of $\eps$ such that, for any $0<\eps\leq1$, when
 $0<h\leq h_0$ and $0<\tau\leq\tau_0$, we have
\begin{align}\label{MTI error bound1}
&\left\|e^n\right\|_{H^2}+\eps^2\left\|\dot{e}^n\right\|_{H^2}\lesssim h^{m_0}+\frac{\tau^2}{\eps^2},
\qquad \left\|e^n\right\|_{H^2}+\eps^2\left\|\dot{e}^n\right\|_{H^2}\lesssim h^{m_0}+\tau^2+\eps^2, \\
&\left\|u^n_I\right\|_{H^2}\leq C_0+1,\quad \left\|\dot{u}^n_I\right\|_{H^2}\leq \frac{C_0+1}{\eps^2},\qquad 0\leq n\leq\frac{T}{\tau}.
\label{MTI sol bound}
\end{align}
Thus, by taking the minimum of the two error bounds in (\ref{MTI error bound1}) for $\eps\in(0,1]$,
we obtain an  error bound which is uniformly convergent for $\eps\in(0,1]$
\begin{equation}
\left\|e^n\right\|_{H^2}+\eps^2\left\|\dot{e}^n\right\|_{H^2}\lesssim h^{m_0}+\tau^2+\min_{0<\eps\leq1}\left\{\frac{\tau^2}{\eps^2},\eps^2\right\}\lesssim h^{m_0}+\tau,
\quad 0\leq n\leq\frac{T}{\tau}.
\end{equation}
\end{theorem}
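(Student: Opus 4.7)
The plan is to prove the two error bounds in \eqref{MTI error bound1} separately by a combined induction and bootstrap argument over $n$, and then \eqref{MTI sol bound} follows from the triangle inequality while \eqref{MTI error bound1} combined with $\min\{\tau^2/\eps^2,\eps^2\}\le \tau$ yields the uniform $O(h^{m_0}+\tau)$ rate. The induction hypothesis at step $n$ carries both error estimates together with the a priori $H^2$-bound \eqref{MTI sol bound}. That a priori bound is what allows us to treat the cubic nonlinearity as Lipschitz on a fixed ball in $H^2$ (using $H^2(\Omega)\hookrightarrow L^\infty(\Omega)$ in 1D), and is closed at step $n+1$ provided $h_0,\tau_0$ are small enough that the right-hand side of \eqref{MTI error bound1} stays below $1$.

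First, I would derive uniform-in-$\eps$ a priori bounds for the continuous MDF system \eqref{MDF trun} with the well-prepared data \eqref{MDF ini}. Under assumption (A), a standard energy argument for the Schr\"odinger-type equation for $z_\pm^n$ and the KG-type equation for $r^n$ (the linear parts are solved exactly by the variation-of-constants \eqref{VCF}) yields $\|z_\pm^n(\cdot,s)\|_{H^{m_0+2}}+\|\partial_s z_\pm^n(\cdot,s)\|_{H^{m_0+2}}\lesssim 1$, together with the crucial small bound $\|r^n(\cdot,s)\|_{H^{m_0+2}}\lesssim \eps^2$ for $s\in[0,\tau]$. The latter follows because $r^n(\cdot,0)=0$ and $\partial_s r^n(\cdot,0)=-\partial_s z_+^n(\cdot,0)-\partial_s \overline{z_-^n}(\cdot,0)$ is, by inspection of \eqref{FSW-i21} and \eqref{MDF}, exactly the $O(\eps^2)$ residual left over from the well-prepared matching; the $(1/\eps^2)r^n$ term in \eqref{MDF trun} then keeps $r^n$ of order $\eps^2$ on the slab.

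Next comes the local truncation analysis, carried out mode-by-mode in Fourier space. For the $z_\pm^n$ integrals in \eqref{VCF}, the Gautschi-type quadrature \eqref{z r app} reproduces the first two Taylor terms of $\widehat{(f_\pm^n)}_l(\theta)$ exactly, so the local error is $O(\tau^3 \|\partial_{ss}f_\pm^n\|)$, contributing $O(\tau^2)$ globally uniformly in $\eps$. For the $r^n$ integrals, the key observation is that $p_l,q_l,p_l',q_l'$ in \eqref{cldl765} handle the fast factor $e^{\pm 3i\theta/\eps^2}$ analytically, so the trapezoidal rule is applied only to the smooth envelope $g_\pm^n$ and $w^n$. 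Two competing local estimates then arise: a direct $O(\tau^3/\eps^2)$ bound from the $1/\eps^2$ prefactor in the $r$-equation gives the first bound $O(\tau^2/\eps^2)$, while using the a priori smallness $\|r^n\|_{H^2}\lesssim \eps^2,\ \|\partial_s r^n\|_{H^2}\lesssim 1$ instead gives a local error of size $O(\tau^3+\tau\eps^2)$, and hence the second bound $O(\tau^2+\eps^2)$ globally. The filter choice in \eqref{MTI-FP E} for $\widetilde{(\dot z_\pm^0)}_l$ (replacing $\mu_l^2$ by $(2/\tau)\sin(\mu_l^2\tau/2)$, cf.\ Remark 3.1) costs only $O(\tau^2)$ in the initial velocity but preserves the $h^{m_0}$ spectral accuracy in space; combined with the $L^2$-projection error and standard aliasing estimates, the spatial contribution is $O(h^{m_0})$.

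Finally, the stability/propagation step uses the fact that the linear parts of \eqref{MDF} are handled exactly by variation-of-constants, so the error amplification comes only from the nonlinearities $f_\pm,g_\pm,w$ and the reconstruction \eqref{MTI-FP S}, all of which are Lipschitz uniformly in $\eps$ under the bootstrap. A discrete Gronwall argument over $n\le T/\tau$ multiplies the local error by an $\eps$-independent constant and yields \eqref{MTI error bound1}. The main obstacle will be to make the $r^n$ local-error analysis rigorous uniformly in $\eps$ and $\tau$: one must show boundedness of $p_l,q_l,p_l',q_l'$ uniformly in $l,\eps,\tau$, and one must obtain the two local bounds ($O(\tau^3/\eps^2)$ and $O(\tau^3+\tau\eps^2)$) \emph{from the same scheme}, so that their minimum delivers the uniform $O(\tau)$ convergence rate claimed in \eqref{MTI sol bound}--\eqref{MTI error bound1} after taking the min over $\eps\in(0,1]$.
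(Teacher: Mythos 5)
Your proposal follows essentially the same route as the paper's proof: a priori bounds for the MDF showing $\|z_\pm^n\|\lesssim 1$ and $\|r^n\|_{H^4}\lesssim\eps^2$ (Lemma \ref{lm: prior}), a mode-by-mode Fourier truncation analysis yielding the two competing local errors $O(\tau^3+\tau\eps^2)$ and $O(\tau^3/\eps^2)$ from the same scheme (Lemma \ref{lm:local_error}), Lipschitz control of the cubic nonlinearity under the bootstrap $H^2$ bound via $H^2\hookrightarrow L^\infty$ (Lemmas \ref{lm: nonlinear0}--\ref{lm: nonlinear}), and a discrete Gronwall/induction closure. The only cosmetic difference is that the paper packages the propagation step through the $\eps$-weighted error energy functional $\mathcal{E}$ of \eqref{error engery}, whereas you track the unweighted norms directly; the substance is identical.
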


In order to prove the above theorem, for $0\leq n\leq\frac{T}{\tau}$, we introduce
\begin{equation}
e^{n}_N(x):=(P_Nu)(x,t_n)-u_I^n(x),\ \ \dot{e}^{n}_N(x):=P_N(\partial_{t}u)(x,t_n)-\dot{u}_I^n(x),\ \ \,x\in\overline{\Omega}.\label{e_N}
\end{equation}
Using the triangle inequality and noticing the assumption (A), we have
\begin{subequations}\label{proof: eq0}
\begin{align}
&\|e^{n}\|_{H^2}\leq\|u(\cdot,t_{n})-P_Nu(\cdot,t_{n})\|_{H^2}+
\|e^{n}_N\|_{H^2}\lesssim h^{m_0+2}+\|e^{n}_N\|_{H^2},\\
&\|\dot{e}^{n}\|_{H^2}\leq\|\partial_tu(\cdot,t_{n})-P_N\partial_tu(\cdot,t_{n})\|_{H^2}+
\|\dot{e}^{n}_N\|_{H^2}\lesssim \frac{1}{\eps^2}h^{m_0+2}+\|\dot{e}^{n}_N\|_{H^2}.
\end{align}
\end{subequations}
Thus we need only obtain estimates for $\|e^{n}_N\|_{H^2}$ and $\|\dot{e}^{n}_N\|_{H^2}$, which
will be done by introducing the following error energy functional
\begin{equation}
\mathcal{E}\left(e_N^n,\dot{e}_N^n\right):=\eps^2\left\|\dot{e}_N^n\right\|_{H^2}^2
+\left\|\partial_xe_N^n\right\|_{H^2}^2+\frac{1}{\eps^2}\left\|e_N^n\right\|_{H^2}^2,
\qquad 0\leq n\leq\frac{T}{\tau},\label{error engery}
\end{equation}
and establishing the following several lemmas.

\begin{lemma}[Formulation of the exact solution]\label{lm: solu}
Denote the Fourier expansion of the exact solution $u(x,t)$ of the problem (\ref{KG trun}) as
\begin{equation}\label{uxt876}
u(x,t)=\sum_{l=-\infty}^\infty \widehat{u}_l(t)\; e^{i\mu_l (x-a)},
\qquad x\in \bar{\Omega}, \quad t\ge0,
\end{equation}
then we have
\begin{subequations}\label{subeq1}
\begin{eqnarray} \label{ultnp123}
\widehat{u}_l(t_{n+1})&=&\cos(\omega_l\tau)\widehat{u}_l(t_n)+
\frac{\sin(\omega_l\tau)}{\omega_l}\widehat{u}_l'(t_n)
-\int_0^\tau\frac{\sin(\omega_l(\tau-\theta))}{\eps^2\omega_l}
\Big[\fe^{i\theta/\eps^2}\widehat{(f_+^n)}_l(\theta)\nonumber\\
&&+\fe^{-i\theta/\eps^2}\widehat{\left(\overline{f_-^n}\right)}_l(\theta)
+\fe^{3i\theta/\eps^2}\widehat{(g_+^n)}_l(\theta)
+\fe^{-3i\theta/\eps^2}\widehat{\left(\overline{g_-^n}\right)}_l(\theta)
+\widehat{(w^n)}_l(\theta)\Big]\,d\theta,\\
\label{vcf: u cubic}
\widehat{u}_l'(t_{n+1})&=&\cos(\omega_l\tau)\widehat{u}_l'(t_n)-\omega_l\sin(\omega_l\tau)\widehat{u}_l(t_n)
-\int_0^\tau\frac{\cos(\omega_l(\tau-\theta))}{\eps^2}\Big[\fe^{i\theta/
\eps^2}\widehat{(f_+^n)}_l(\theta)\nonumber\\
&&+\fe^{-i\theta/\eps^2}\widehat{\left(\overline{f_-^n}\right)}_l(\theta)
+\fe^{3i\theta/\eps^2}\widehat{(g_+^n)}_l(\theta)
+\fe^{-3i\theta/\eps^2}\widehat{\left(\overline{g_-^n}\right)}_l(\theta)
+\widehat{(w^n)}_l(\theta)\Big]\,d\theta.
\end{eqnarray}
\end{subequations}
\end{lemma}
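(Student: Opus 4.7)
The plan is to obtain \eqref{subeq1} by combining a scalar Duhamel formula in Fourier space for the truncated KG equation \eqref{KG trun} with an algebraic decomposition of the cubic nonlinearity induced by the multiscale ansatz \eqref{ansatz}. I would carry this out in three steps.

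First, I would project \eqref{KG trun} onto the periodic Fourier basis. Substituting \eqref{uxt876} reduces the PDE, after dividing by $\eps^2$, to the forced scalar linear ODE
\begin{equation*}
\widehat{u}_l''(t)+\omega_l^2\,\widehat{u}_l(t)=-\frac{1}{\eps^2}\,\widehat{f(u)}_l(t),\qquad \omega_l^2=\frac{1+\mu_l^2\eps^2}{\eps^4},
\end{equation*}
consistent with the $\omega_l$ introduced in connection with \eqref{VCF}. Using the fundamental pair $\{\cos(\omega_l s),\,\omega_l^{-1}\sin(\omega_l s)\}$ of the associated homogeneous problem, Duhamel's principle on $[t_n,t_n+s]$ yields
\begin{equation*}
\widehat{u}_l(t_n+s)=\cos(\omega_l s)\widehat{u}_l(t_n)+\frac{\sin(\omega_l s)}{\omega_l}\widehat{u}_l'(t_n)-\int_0^s\frac{\sin(\omega_l(s-\theta))}{\eps^2\omega_l}\,\widehat{f(u)}_l(t_n+\theta)\,d\theta,
\end{equation*}
and an analogous representation for $\widehat{u}_l'(t_n+s)$ by differentiating in $s$. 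Setting $s=\tau$ produces the linear skeleton and the integral kernel appearing in \eqref{subeq1}.

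Second, I would identify the integrand. Substituting the ansatz \eqref{ansatz} into $f(u)=\lambda|u|^2 u$ and abbreviating $a=\fe^{is/\eps^2}z_+^n$, $b=\fe^{-is/\eps^2}\overline{z_-^n}$, the definition of $w^n$ in \eqref{g_pm cubic} gives $f(u)=f(a+b)+w^n$. A direct expansion shows
\begin{equation*}
|a+b|^2(a+b)=(|a|^2+2|b|^2)a+(2|a|^2+|b|^2)b+a^2\bar b+b^2\bar a,
\end{equation*}
and since $|a|=|z_+^n|$, $|b|=|z_-^n|$ and $\lambda\in\bR$, the formulas \eqref{fr cubic}--\eqref{g_pm cubic} for $f_\pm^n$ and $g_\pm^n$ collapse the five resulting time-phase modes to
\begin{equation*}
f(u(x,t_n+\theta))=\fe^{i\theta/\eps^2}f_+^n+\fe^{-i\theta/\eps^2}\overline{f_-^n}+\fe^{3i\theta/\eps^2}g_+^n+\fe^{-3i\theta/\eps^2}\overline{g_-^n}+w^n.
\end{equation*}
Taking the $l$-th Fourier coefficient in $x$ term by term and substituting into the Duhamel formula of the first step produces precisely \eqref{subeq1}.

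I do not expect any substantive analytic obstacle, since the lemma is essentially the scalar variation-of-constants identity dressed with algebraic bookkeeping. The only place that requires care is the cubic expansion in the second step, where the cross terms $a^2\bar b$ and $b^2\bar a$ must be matched to $g_+^n$ and $\overline{g_-^n}$, and where $\lambda\in\bR$ is needed so that $2|a|^2b+|b|^2b$ collapses cleanly to $\fe^{-i\theta/\eps^2}\overline{f_-^n}$ rather than to a complex-conjugated coefficient.
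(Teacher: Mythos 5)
Your proposal is correct and follows essentially the same route as the paper's proof: project \eqref{KG trun} onto the Fourier basis to get the scalar ODE $\eps^2\widehat{u}_l''+(\mu_l^2+\eps^{-2})\widehat{u}_l+\widehat{f(u)}_l=0$, apply the variation-of-constants formula with kernel $\sin(\omega_l(s-\theta))/(\eps^2\omega_l)$, insert the five-mode decomposition \eqref{fuxts} of the cubic nonlinearity induced by the ansatz \eqref{ansatz}, and obtain the derivative identity by differentiating in $s$ before setting $s=\tau$. The extra algebraic detail you supply for matching $(|a|^2+2|b|^2)a$, $(2|a|^2+|b|^2)b$, $a^2\bar b$, $b^2\bar a$ to $f_\pm^n$, $g_\pm^n$ (and the role of $\lambda\in\bR$ in the conjugations) is exactly what the paper leaves implicit in citing \eqref{fr cubic}--\eqref{g_pm cubic}.
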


\begin{proof}
Substituting (\ref{uxt876}) with $t=t_n+s$ into (\ref{KG trun}), we have
\begin{equation}
\eps^2\widehat{u}_l''(t_n+s)+\left(\mu_l^2+\frac{1}{\eps^2}\right)\widehat{u}_l(t_n+s)+
\widehat{f(u)}_l(t_n+s)=0,\ \  s>0.\label{KG fft eq}
\end{equation}
Applying the variation-of-constant formula to (\ref{KG fft eq}) and noticing (\ref{nd pm def}), we get
\begin{eqnarray}\label{ultns34}
\widehat{u}_l(t_n+s)&=&\cos(\omega_ls)\widehat{u}_l(t_n)+\frac{\sin(\omega_ls)}
{\omega_l}\widehat{u}_l'(t_n)\nonumber\\
&&-\int_0^s\frac{\sin(\omega_l(s-\theta))}{\eps^2\omega_l}\widehat{f(u)}_l(t_n+\theta)\,d\theta,
\quad 0\le s\le \tau.
\end{eqnarray}
For the cubic nonlinearity $f(u)=\lambda |u|^2u$ and noticing (\ref{ansatz}), (\ref{fr cubic})
and (\ref{g_pm cubic}), we have
\begin{eqnarray}\label{fuxts}
f(u(x,t_n+s))&=&\fe^{is/\eps^2}f_+^n(x,s)+\fe^{-is/\eps^2}\overline{f_-^n}(x,s)+
\fe^{3is/\eps^2}g_+^n(x,s)\nonumber\\
&&+\fe^{-3is/\eps^2}\overline{g_-^n}(x,s)+w^n(x,s), \qquad x\in \bar{\Omega}, \quad 0\le s\le \tau,
\end{eqnarray}
where
\begin{equation}\label{fg_pm w}
\left\{\begin{split}
&f_\pm^n(x,s)=f_\pm(z_+^n(x,s),z_-^n(x,s)),\qquad g_\pm^n(x,s)=g_\pm(z_+^n(x,s),z_-^n(x,s)),\\
&w^n(x,s)=w^n(z_+^n(x,s),z_-^n(x,s),r^n(x,s);s),\qquad x\in\overline{\Omega},\ \ 0\leq s\leq\tau.
\end{split}\right.
\end{equation}
Plugging (\ref{fuxts}) and (\ref{fg_pm w}) into (\ref{ultns34}), we get
\begin{align}
\widehat{u}_l(t_n+s)=&\cos(\omega_ls)\widehat{u}_l(t_n)+\frac{\sin(\omega_ls)}{\omega_l}\widehat{u}_l'(t_n)
-\int_0^s\frac{\sin(\omega_l(s-\theta))}{\eps^2\omega_l}
\Big[\fe^{i\theta/\eps^2}\widehat{(f_+^n)}_l(\theta)\nonumber\\
&+\fe^{-i\theta/\eps^2}\widehat{\left(\overline{f_-^n}\right)}_l(\theta)
+\fe^{3i\theta/\eps^2}\widehat{(g_+^n)}_l(\theta)
+\fe^{-3i\theta/\eps^2}\widehat{\left(\overline{g_-^n}\right)}_l(\theta)+
\widehat{(w^n)}_l(\theta)\Big]\,d\theta.\label{vcf: u}
\end{align}
Then
we can obtain (\ref{ultnp123}) by setting  $s=\tau$ in (\ref{vcf: u}) and
get (\ref{vcf: u cubic}) by taking derivative with respect to $s$ in (\ref{vcf: u}) and then letting $s=\tau$.
\end{proof}

\begin{lemma}[A new formulation of MTI-FP]\label{lm: rewrite scheme}
For $n\ge0$, expanding $u_I^n(x)$ and $\dot{u}_I^n(x)$ in (\ref{uI854}) into Fourier series as
\begin{equation} \label{uInx876}
u_I^{n}(x)=\sum_{l=-N/2}^{N/2-1}\widetilde{(u_I^{n})}_l\;\fe^{i\mu_l(x-a)},\quad
\dot{u}_I^{n}(x)=\sum_{l=-N/2}^{N/2-1}\widetilde{(\dot{u}_I^{n})}_l\;\fe^{i\mu_l(x-a)},\qquad
x\in\bar{\Omega},
\end{equation}
then we have
\begin{equation}\label{observ of scheme}
\left\{
\begin{split}
&\widetilde{(u^{n+1}_I)}_l=\cos(\omega_l\tau)\widetilde{(u^{n}_I)}_l
+\frac{\sin(\omega_l\tau)}{\omega_l}\widetilde{(\dot{u}^{n}_I)}_l-\widetilde{G^n_l},\\
&\widetilde{(\dot{u}^{n+1}_I)}_l=-\omega_l\sin(\omega_l\tau)\widetilde{(u^{n}_I)}_l
+\cos(\omega_l\tau)\widetilde{(\dot{u}^{n}_I)}_l-
\widetilde{\dot{G}^n_l},\quad l=-\frac{N}{2},\ldots, \frac{N}{2}-1,
\end{split}\right.
\end{equation}
where
\begin{eqnarray}\label{nonlin mum dt}
\widetilde{G^n_l}&=&\fe^{i\tau/\eps^2}\left[c_l(\tau)\widetilde{(f_+^0)}_l+d_l(\tau)
\widetilde{(\dot{f}_+^0)}_l\right]+\fe^{-i\tau/\eps^2}
\left[\overline{c_l}(\tau)\widetilde{\left(\overline{f_-^0}\right)}_l+\overline{d_l}
(\tau)\widetilde{\left(\overline{\dot{f}_-^0}\right)}_l\right]\nonumber\\
&&+p_l(\tau)\widetilde{(g_+^0)}_l
+q_l(\tau)\widetilde{(\dot{g}_+^0)}_l+\overline{p_l}(\tau)\widetilde{\left(\overline{g_-^0}\right)}_l
+\overline{q_l}(\tau)\widetilde{\left(\overline{\dot{g}_-^0}\right)}_l,
\end{eqnarray}
\begin{eqnarray}\label{nonlin num dt}
\widetilde{\dot{G}^n_l}&=&\fe^{i\tau/\eps^2}\left[c_l'(\tau)+\frac{i}{\eps^2}c_l(\tau)\right]
\widetilde{(f_+^0)}_l+\fe^{i\tau/\eps^2}
\left[d_l'(\tau)+\frac{i}{\eps^2}d_l(\tau)\right]\widetilde{(\dot{f}_+^0)}_l\nonumber\\
&&+\fe^{-i\tau/\eps^2}\left[\overline{c_l'}(\tau)
-\frac{i}{\eps^2}\overline{c_l}(\tau)\right]\widetilde{\left(\overline{f_-^0}\right)}_l+
\fe^{-i\tau/\eps^2}\left[\overline{d_l'}(\tau)
-\frac{i}{\eps^2}\overline{d_l}(\tau)\right]\widetilde{\left(\overline{\dot{f}_-^0}\right)}_l\nonumber\\
&&+p_l'(\tau)\widetilde{(g_+^0)}_l+q_l'(\tau)\widetilde{(\dot{g}_+^0)}_l
+\overline{p_l'}(\tau)\widetilde{\left(\overline{g_-^0}\right)}_l+\overline{q_l'}(\tau)
\widetilde{\left(\overline{\dot{g}_-^0}\right)}_l
+\frac{\tau}{2\eps^2}\widetilde{(w^{n+1})}_l.
\end{eqnarray}

\end{lemma}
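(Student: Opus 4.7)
The goal is a direct algebraic verification: starting from the MTI-FP update (\ref{MTI-FP S})--(\ref{MTI-FP E}), I rewrite every quantity on the Fourier side and collapse it to the form (\ref{observ of scheme})--(\ref{nonlin num dt}). My plan is to apply the discrete Fourier transform at mode $l$ to both equations of (\ref{MTI-FP S}), yielding
\[
\widetilde{(u_I^{n+1})}_l=\fe^{i\tau/\eps^2}\widetilde{(z_+^{n+1})}_l+\fe^{-i\tau/\eps^2}\widetilde{\left(\overline{z_-^{n+1}}\right)}_l+\widetilde{(r^{n+1})}_l,
\]
and its $\dot{u}$-counterpart. Then I insert the closed-form expressions from (\ref{Fourier psu coeff}) for $\widetilde{(z_\pm^{n+1})}_l$, $\widetilde{(r^{n+1})}_l$, $\widetilde{(\dot z_\pm^{n+1})}_l$ and $\widetilde{(\dot r^{n+1})}_l$, sort all contributions into a \emph{linear} part (everything proportional to $\widetilde{(z_\pm^0)}_l$ or $\widetilde{(\dot z_\pm^0)}_l$ or $\widetilde{(\dot r^0)}_l$) and a \emph{nonlinear} part (everything proportional to $\widetilde{(f_\pm^0)}_l$, $\widetilde{(\dot f_\pm^0)}_l$, $\widetilde{(g_\pm^0)}_l$, $\widetilde{(\dot g_\pm^0)}_l$ or $\widetilde{(w^{n+1})}_l$). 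The nonlinear part will immediately reproduce $-\widetilde{G_l^n}$ and $-\widetilde{\dot G_l^n}$ after using the initial data (\ref{MTI-FP E}) for the $z_+^0$ / $\overline{z_-^0}$ alignment with $\fe^{\pm i\tau/\eps^2}$.

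The heart of the argument is to simplify the linear part through the identity $\lambda_l^\pm=-\tfrac{1}{\eps^2}\mp\omega_l$, which converts
\[
\fe^{is\lambda_l^\pm}=\fe^{-is/\eps^2}\fe^{\mp is\omega_l}.
\]
Using this I can establish the four key identities
\begin{equation*}
\fe^{i\tau/\eps^2}a_l(\tau)=\cos(\omega_l\tau)+\tfrac{i}{\eps^2\omega_l}\sin(\omega_l\tau),\quad
\fe^{i\tau/\eps^2}\eps^2 b_l(\tau)=\tfrac{\sin(\omega_l\tau)}{\omega_l},
\end{equation*}
\begin{equation*}
\fe^{i\tau/\eps^2}\bigl[a_l'(\tau)+\tfrac{i}{\eps^2}a_l(\tau)\bigr]=-\omega_l\sin(\omega_l\tau)+\tfrac{i}{\eps^2}\cos(\omega_l\tau),\quad
\fe^{i\tau/\eps^2}\bigl[\eps^2 b_l'(\tau)+ib_l(\tau)\bigr]=\cos(\omega_l\tau),
\end{equation*}
together with their complex conjugates. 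Substituting $\widetilde{(z_+^0)}_l=\tfrac{1}{2}[\widetilde{(u^n)}_l-i\eps^2\widetilde{(\dot u^n)}_l]$ and $\widetilde{\left(\overline{z_-^0}\right)}_l=\tfrac{1}{2}[\widetilde{(u^n)}_l+i\eps^2\widetilde{(\dot u^n)}_l]$ and summing the two $\fe^{\pm i\tau/\eps^2}$ branches, the $a_l(\tau)$-contributions collapse via the first identity to $\cos(\omega_l\tau)\widetilde{(u_I^n)}_l+\tfrac{\sin(\omega_l\tau)}{\omega_l}\widetilde{(\dot u_I^n)}_l$, exactly as required by (\ref{observ of scheme}); the parallel computation with the third identity yields $-\omega_l\sin(\omega_l\tau)\widetilde{(u_I^n)}_l+\cos(\omega_l\tau)\widetilde{(\dot u_I^n)}_l$ for the $\dot u$-equation.

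The next bookkeeping step is to eliminate the $\widetilde{(\dot z_\pm^0)}_l$ contributions. These appear with coefficients $\fe^{\pm i\tau/\eps^2}\eps^2 b_l(\tau)=\sin(\omega_l\tau)/\omega_l$ from the $z_\pm^{n+1}$ branches and with coefficient $\sin(\omega_l\tau)/\omega_l$ from the $r^{n+1}$ branch multiplied by $\widetilde{(\dot r^0)}_l$. Because the initial condition in (\ref{MTI-FP E}) is precisely $\widetilde{(\dot r^0)}_l=-\widetilde{(\dot z_+^0)}_l-\widetilde{\left(\overline{\dot z_-^0}\right)}_l$, these two contributions cancel exactly, and the analogous cancellation for the $\dot u^{n+1}$ equation follows from the fourth identity, which matches the $\cos(\omega_l\tau)$ coefficient of $\widetilde{(\dot r^0)}_l$ from the $\dot r^{n+1}$ branch.

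Finally, I collect the nonlinear leftovers. The $z_+^{n+1}$ branch contributes $-\fe^{i\tau/\eps^2}[c_l(\tau)\widetilde{(f_+^0)}_l+d_l(\tau)\widetilde{(\dot f_+^0)}_l]$, the $\overline{z_-^{n+1}}$ branch contributes its conjugate partner, and the $r^{n+1}$ branch contributes the four $p_l,q_l,\overline{p_l},\overline{q_l}$ terms; summing (with the overall minus sign absorbed into $-\widetilde{G_l^n}$) reproduces (\ref{nonlin mum dt}) verbatim. For the $\dot u^{n+1}$ equation the extra $\pm i/\eps^2$ factors in (\ref{MTI-FP S}) introduce the combinations $c_l'(\tau)+\tfrac{i}{\eps^2}c_l(\tau)$, $d_l'(\tau)+\tfrac{i}{\eps^2}d_l(\tau)$ (and conjugates), and the $\dot r^{n+1}$ branch supplies the extra $\tfrac{\tau}{2\eps^2}\widetilde{(w^{n+1})}_l$ term, giving exactly (\ref{nonlin num dt}). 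The only non-routine part of the proof is the careful verification of the four trigonometric identities above; everything else is a matter of tracking coefficients, so I anticipate no essential obstacle beyond that algebraic bookkeeping.
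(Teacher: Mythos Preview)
Your proposal is correct and follows essentially the same route as the paper's proof: both take the discrete Fourier transform of (\ref{MTI-FP S}), insert (\ref{Fourier psu coeff}), use the initial relations (\ref{MTI-FP E}) to express $\widetilde{(z_\pm^0)}_l$, $\widetilde{(\overline{z_-^0})}_l$ in terms of $\widetilde{(u_I^n)}_l$, $\widetilde{(\dot u_I^n)}_l$, cancel the $\widetilde{(\dot z_\pm^0)}_l$ against $\widetilde{(\dot r^0)}_l$, and collect the nonlinear remainder into $\widetilde{G_l^n}$, $\widetilde{\dot G_l^n}$. The only cosmetic difference is that the paper packages your four trigonometric identities as taking real/imaginary parts of $\fe^{i\tau/\eps^2}a_l(\tau)$ and $\fe^{i\tau/\eps^2}[a_l'(\tau)+\tfrac{i}{\eps^2}a_l(\tau)]$, invoking the evenness $a_l=a_{-l}$, $b_l=b_{-l}$ to handle the $\overline{z_-^{n+1}}$ branch---which is equivalent to your ``conjugate partner'' remark.
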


\begin{proof} Combining (\ref{MTI-FP E}), (\ref{uInx876}) and (\ref{uI854}), we have
\begin{equation}\label{lm: rewrite scheme eq1}
\left\{\begin{split}
&\widetilde{(z_+^0)}_l=\frac{1}{2}\left[\widetilde{(u^n_I)}_l-i\eps^2\widetilde{(\dot{u}^n_I)}_l\right],\qquad
\widetilde{(z_-^0)}_l=\frac{1}{2}\left[\widetilde{\left(\overline{u^n_I}\right)}_l-i\eps^2
\widetilde{\left(\overline{\dot{u}^n_I}\right)}_l\right],\\
&\widetilde{(\dot{r}^0)}_l=-\widetilde{(\dot{z}_+^0)}_l-\widetilde{\left(\overline{\dot{z}_-^0}\right)}_l,\qquad l=-\frac{N}{2},\ldots,\frac{N}{2}-1.
\end{split}\right.
\end{equation}
Inserting (\ref{lm: rewrite scheme eq1}) into (\ref{Fourier psu coeff}) and noticing
(\ref{MTI-FP S}), (\ref{zpm4567}), (\ref{uInx876}) and (\ref{uI854}), we get
\begin{subequations}\label{subeq3}
\begin{eqnarray}
\widetilde{(u^{n+1}_I)}_l&=&\fe^{i\tau/\eps^2}\widetilde{(z_+^{n+1})}_l
+\fe^{-i\tau/\eps^2}\widetilde{\left(\overline{z_-^{n+1}}\right)}_l
+\widetilde{(r^{n+1})}_l\nonumber\\
&=&{\rm Re}\left\{\fe^{i\tau/\eps^2}a_l(\tau)\right\}\widetilde{(u^{n}_I)}_l
+\eps^2\;{\rm Im}\left\{\fe^{i\tau/\eps^2}a_l(\tau)\right\}\widetilde{(\dot{u}^{n}_I)}_l
+\eps^2\fe^{i\tau/\eps^2}b_l(\tau)\widetilde{(\dot{z}_+^0)}_l\nonumber\\
&&+\eps^2\fe^{-i\tau/\eps^2}\overline{b_l}(\tau)\widetilde{\left(\overline{\dot{z}_-^0}\right)}_l+
\frac{\sin(\omega_l\tau)}{\omega_l}\widetilde{(\dot{r}^0)}_l-\widetilde{G^n_l},
\quad l=-\frac{N}{2},\ldots,\frac{N}{2}-1,\label{lm: rewrite scheme eq0}\\
\widetilde{(\dot{u}^{n+1}_I)}_l&=&\fe^{i\tau/\eps^2}\Big[\widetilde{(\dot{z}_+^{n+1})}_l
+\frac{i}{\eps^2}\widetilde{(z_+^{n+1})}_l \Big]
+\fe^{-i\tau/\eps^2}\Big[\widetilde{\left(\overline{\dot{z}_-^{n+1}}\right)}_l
-\frac{i}{\eps^2}\widetilde{\left(\overline{z_-^{n+1}}\right)}_l \Big]+\widetilde{(\dot{r}^{n+1})}_l\nonumber\\
&=&{\rm Re}\left\{\fe^{i\tau/\eps^2}a_l'(\tau)
+\frac{i}{\eps^2}\fe^{\frac{i\tau}{\eps^2}}a_l(\tau)\right\}\widetilde{(u^{n}_I)}_l
+\eps^2\fe^{-i\tau/\eps^2}\left[\overline{b_l'}(\tau)-\frac{i}{\eps^2}\overline{b_l}(\tau)\right]
\widetilde{\left(\overline{\dot{z}_-^0}\right)}_l\nonumber\\
&&+\eps^2\;{\rm Im}\left\{\fe^{i\tau/\eps^2}a_l'(\tau)
+\frac{i}{\eps^2}\fe^{\frac{i\tau}{\eps^2}}a_l(\tau)\right\}\widetilde{(\dot{u}^{n}_I)}_l
+\eps^2\fe^{i\tau/\eps^2}\left[b_l'(\tau)+\frac{i}{\eps^2}b_l(\tau)\right]\widetilde{(\dot{z}_+^0)}_l\nonumber\\
&&+\cos(\omega_l\tau)\widetilde{(\dot{r}^0)}_l-\widetilde{\dot{G}^n_l},
\qquad l=-\frac{N}{2},\ldots,\frac{N}{2}-1,\label{unp134567}
\end{eqnarray}
\end{subequations}
where ${\rm Re}(\alpha)$ and ${\rm Im}(\alpha)$ denote the real and imaginary parts of a complex number
$\alpha$, respectively. Thus we can obtain (\ref{observ of scheme}) from
(\ref{subeq3}) by using the fact
that $a_l(\tau)=a_{-l}(\tau)$ and  $b_l(\tau)=b_{-l}(\tau)$ for $l=-N/2,\ldots,N/2-1$
in (\ref{nd pm def}).
\end{proof}

For $0\leq n\leq\frac{T}{\tau}$, let ${z}_\pm^n(x,s)$ and $r^n(x,s)$ be the solution of the MDF (\ref{MDF trun})-(\ref{MDF ini})
with $\phi_1^n(x)=u(x,t_n)$ and $\phi_2^n(x)=\eps^2 \partial_t u(x,t_n)$ for $x\in\bar{\Omega}$,
then we have

\begin{lemma}[A prior estimate of MDF] \label{lm: prior}
Under the assumption (A), there exists a constant $\tau_1>0$
independent of $0<\eps\le1$ and $h>0$, such that for $0<\tau\leq\tau_1$
\begin{align}
&\left\|z_\pm^n\right\|_{L^\infty([0,\tau];H^{m_0+2})}+\left\|\partial_sz_\pm^n\right
\|_{L^\infty([0,\tau];H^{m_0+1})}+
\left\|\partial_{ss}z_\pm^n\right\|_{L^\infty([0,\tau];H^{m_0})}\lesssim1,\label{reg z}\\
&\left\|r^n\right\|_{L^\infty([0,\tau];H^{4})}+\eps^2\left\|\partial_s r^n\right\|_{L^\infty([0,\tau];H^{3})}+\eps^4\left\|\partial_{ss}
r^n\right\|_{L^\infty([0,\tau];H^{2})}\lesssim \eps^2. \label{reg r}
\end{align}
\end{lemma}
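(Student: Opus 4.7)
The proof splits naturally into two parts: uniform bounds on the NLSW-type system for $z_\pm^n$, giving (\ref{reg z}); and uniform $O(\eps^2)$ bounds on the KG-type equation for $r^n$, giving (\ref{reg r}). The second part is the technically more delicate one.

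For (\ref{reg z}), the plan is to invoke the uniform well-posedness theory for the nonlinear Schr\"odinger equation with wave operator (NLSW). By the construction in (\ref{MDF ini}), the initial velocity $\partial_s z_\pm^n(\cdot,0)$ is chosen so that the formally limiting NLS equation $2i\partial_s z_\pm - \Delta z_\pm + f_\pm(z_+^n,z_-^n) = 0$ is satisfied at $s=0$; this is the classical \emph{well-prepared data} condition. Under assumption (A) the data $z_\pm^n(\cdot,0) = \frac{1}{2}(\phi_1^n \mp i\phi_2^n)$ together with its formula-prescribed time derivative are bounded in the relevant Sobolev norms uniformly in $\eps$ and $n$, and the uniform $H^k$ regularity theory for NLSW with well-prepared data (cf. \cite{Cai1,Cai2}) then yields (\ref{reg z}) on a fixed interval $[0,\tau_1]$ independent of $\eps$. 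The bound on $\partial_{ss}z_\pm^n$ is read off directly from the equation after isolating the $\eps^2\partial_{ss}$ term, using the compatibility built into the well-prepared data to cancel the apparent $1/\eps^2$ factor.

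For (\ref{reg r}), I would work with the Duhamel formula (\ref{VCF}) in Fourier space. Since $r^n(\cdot,0)=0$, only the initial velocity contributes to the homogeneous part, and the bound $|\sin(\omega_l s)/\omega_l| \le \eps^2$ (using $\omega_l \ge 1/\eps^2$) together with $\|\partial_s r^n(\cdot,0)\|_{H^{m_0+2}} \lesssim 1$ (obtained from Step 1 via (\ref{MDF ini})) shows that this piece is already $O(\eps^2)$ in $H^4$. For the nonlinear integral I would insert the cubic decomposition (\ref{fr cubic}): the oscillatory pieces $\fe^{\pm 3i\theta/\eps^2}g_\pm(z_+^n,z_-^n)$ are non-resonant with the propagator frequencies $\pm\omega_l$, since $|3/\eps^2 \pm \omega_l| \gtrsim 1/\eps^2$, so a single integration by parts in $\theta$ extracts an $\eps^2$ prefactor at the cost of $\partial_\theta g_\pm$, which is controlled by Step 1. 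The remainder $w^n$ vanishes at $r^n=0$ and is Lipschitz in $r^n$ in $H^4$ with a uniform constant (using the $H^4$ bound on $z_\pm^n$), so its contribution is $\lesssim \int_0^s \|r^n\|_{H^4}\,d\theta$, and Gronwall's inequality closes the estimate to give $\|r^n\|_{H^4} \lesssim \eps^2$. The bound on $\eps^2\|\partial_s r^n\|_{H^3}$ is obtained by applying the same non-resonance and Gronwall argument to the differentiated Duhamel formula (\ref{dz dr}), and the bound on $\eps^4\|\partial_{ss}r^n\|_{H^2}$ follows by reading the PDE for $r^n$ as $\eps^4\partial_{ss}r^n = \eps^2\Delta r^n - r^n - \eps^2 f_r^n$ and inserting the previous estimates.

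The main technical obstacle is the non-resonance analysis in Step 2: without exploiting the separation between the cubic forcing frequency $3/\eps^2$ and the Klein--Gordon propagator frequencies $\pm\omega_l \approx \pm 1/\eps^2$, the naive Duhamel estimate would give only an $O(1)$ bound on $r^n$ in $H^4$, not the crucial $O(\eps^2)$ one, and the whole scheme of uniform-in-$\eps$ convergence would collapse. Everything else is bookkeeping with algebra properties of $H^k(\Omega)$, $k \ge 2$, and Gronwall.
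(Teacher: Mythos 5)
Your overall strategy coincides with the paper's: the bounds (\ref{reg z}) are obtained by appealing to the uniform regularity theory for the nonlinear Schr\"odinger equation with wave operator from \cite{Cai1,Cai2} together with the well-preparedness of the data (\ref{MDF ini}); and for $r^n$ one uses the Duhamel representation in Fourier space, the fact that $|\sin(\omega_l s)/\omega_l|\le\eps^2$ kills the homogeneous part, an integration by parts in $\theta$ on the oscillatory cubic terms to gain a factor $\eps^2$, a vanishing-plus-Lipschitz treatment of $w^n$ closed by Gronwall, and the remaining two bounds from the differentiated Duhamel formula and from the equation itself. There is, however, one genuine gap in your non-resonance step. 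You assert that $|3/\eps^2\pm\omega_l|\gtrsim 1/\eps^2$ uniformly in $l$. The $+$ sign is harmless, but $\omega_l=\frac{1}{\eps^2}\sqrt{1+\mu_l^2\eps^2}$ passes through the forcing frequency $3/\eps^2$ exactly when $\mu_l^2\eps^2=8$, i.e.\ at $|\mu_l|\approx 2\sqrt{2}/\eps$. Since the lemma concerns the exact solution of the MDF, all $l\in\bZ$ occur, so for each $\eps$ there are modes arbitrarily close to this resonance; there the antiderivative produced by your integration by parts carries the factor $\bigl(\eps^2(3/\eps^2-\omega_l)\bigr)^{-1}$, which is unbounded, and the $O(\eps^2)$ gain is lost. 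This is not a removable technicality: near the resonant band your argument as written yields only $\|r^n\|_{H^4}\lesssim 1$.

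The paper closes exactly this hole by splitting the frequencies at $N_\eps=\left[\frac{b-a}{2\pi\eps}\right]$. For $|l|\le N_\eps$ one has $\mu_l^2\eps^2\le 1$, hence $\eps^4\omega_l^2-9=\mu_l^2\eps^2-8\le -7$ is bounded away from zero and the antiderivative $T_l(\theta)$ is genuinely $O(\eps^2)$, so the integration by parts is legitimate there. For $|l|>N_\eps$ no oscillation is exploited at all: the contribution of $g_\pm^n$ is instead absorbed into the spectral tail $\|g_\pm^n-P_{N_\eps}g_\pm^n\|_{H^4}$, which is small because $g_\pm^n$ inherits the extra regularity $H^{m_0+2}$ from (\ref{reg z}) while only $H^4$ control is needed. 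You would need to add this (or an equivalent) frequency decomposition to make your second step correct; the rest of your outline matches the paper's proof.
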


\begin{proof}
From (\ref{MDF ini}) and noticing the assumption (A) and (\ref{fr cubic}), we have
\begin{eqnarray*}
&&\|z_\pm^n(\cdot,0)\|_{H^{m_0+4}}\lesssim \|u(\cdot,t_n)\|_{H^{m_0+4}}+\eps^2\|\partial_tu(\cdot,t_n)\|_{H^{m_0+4}}\lesssim1,\\
&&\|\partial_sz_\pm^n(\cdot,0)\|_{H^{m_0+2}}\lesssim\|\partial_{xx}z_\pm^n(\cdot,0)
\|_{H^{m_0+2}}+\|f_\pm(z_+^n(\cdot,0),z_-^n(\cdot,0))\|_{H^{m_0+2}}
\lesssim1,
\end{eqnarray*}
which immediately imply
\begin{equation}
\|\partial_sr^n(\cdot,0)\|_{H^{m_0+2}}\leq\|\partial_sz_+^n(\cdot,0)\|_{H^{m_0+2}}
+\|\partial_sz_-^n(\cdot,0)\|_{H^{m_0+2}}\lesssim1. \label{lm: prior eq5}
\end{equation}
Similar to the proof for the nonlinear Schr\"{o}dinger equation with wave operator \cite{Cai1,Cai2},
we can easily establish (\ref{reg z})  and the details are omitted here for brevity. Taking the Fourier expansion of
$r^n(x,s)$ and noticing (\ref{MDF trun}), (\ref{MDF ini}), (\ref{fr cubic}) and (\ref{g_pm cubic}), we obtain
\begin{equation}
r^n(x,s)=\sum_{l=-\infty}^\infty \widehat{(r^n)}_l(s)\; e^{i\mu_l(x-a)}, \qquad x\in \bar{\Omega}, \quad
0\le s\le \tau,
\end{equation}
where  for $l\in {\mathbb Z}$
\begin{align}\label{r l cubic}
\widehat{(r^n)}_l(s)=&\frac{\sin(\omega_l s)}{\omega_l}\widehat{(r^n)}_l'(0)
-\int_0^s\frac{\sin\left(\omega_l(s-\theta)\right)}{\eps^2\omega_l}\fe^{3i\theta/\eps^2}
\widehat{(g_+^n)}_l(\theta)\,d\theta \nonumber\\
&-\int_0^s\frac{\sin\left(\omega_l(s-\theta)\right)}{\eps^2\omega_l}\fe^{-3i\theta/\eps^2}
\widehat{\left(\overline{g_-^n}\right)}_l(\theta)\,d\theta
-\int_0^s\frac{\sin\left(\omega_l(s-\theta)\right)}{\eps^2\omega_l}\widehat{(w^n)}_l(\theta)\,d\theta.
\end{align}
Let $N_\eps:=\left[\frac{b-a}{2\pi\eps}\right]=O\left(\frac{1}{\eps}\right)$ be the integer part of $\frac{b-a}{2\pi\eps}$.
From (\ref{r l cubic}),  integrating by parts and using the Cauchy's
and H\"{o}lder's inequalities, we obtain for $|l|\le N_\eps$
\begin{eqnarray}\label{lm: prior eq2}
|\widehat{(r^n)}_l(s)|^2&\lesssim&\biggl|\eps^2|\widehat{(r^n)}_l'(0)|
+\eps^2\left[|\widehat{(g_+^n)}_l(s)|+|\widehat{(g_+^n)}_l(0)|+|\widehat{(\overline{g_-^n})}_l(s)|
+|\widehat{(\overline{g_-^n})}_l(0)|\right]\nonumber\\
&&+\int_0^s\left[\eps^2\left(|\widehat{(g_+^n)}_l'(\theta)|+|\widehat{(\overline{g_-^n})}_l'(\theta)|\right)
+|\widehat{(w^n)}_l(\theta)|\right] d\theta\biggr|^2\nonumber\\
&\lesssim&\eps^4\left[|\widehat{(r^n)}_l'(0)|^2
+|\widehat{(g_+^n)}_l(s)|^2+|\widehat{(g_+^n)}_l(0)|^2+|\widehat{(\overline{g_-^n})}_l(s)|^2
+|\widehat{(\overline{g_-^n})}_l(0)|^2\right]\nonumber\\
&&+\int_0^s\left[\eps^4\left(|\widehat{(g_+^n)}_l'(\theta)|^2+|\widehat{(\overline{g_-^n})}_l'(\theta)|^2\right)
+|\widehat{(w^n)}_l(\theta)|^2\right] d\theta.
\end{eqnarray}
Here we use the fact that for $|l|\le N_\eps$
\begin{eqnarray*}
T_l(\theta)&=&\frac{\eps^2\fe^{3i\theta/\eps^2}}{\eps^4\omega_l^2-9}\left[\cos(\omega_l(s-\theta))
+\frac{3i}{\eps^2\omega_l}\sin(\omega_l(s-\theta))\right]=O(\eps^2),\\
T_l^\prime(\theta)&=&\frac{\sin(\omega_l(s-\theta))}{\eps^2\omega_l}
\fe^{3i\theta/\eps^2}=O(1), \quad 0\leq\theta\leq s\leq \tau, \qquad 0<\eps\le 1.
\end{eqnarray*}
Similarly, we can get for $|l|> N_\eps$
\begin{equation}\label{rn9765}
|\widehat{(r^n)}_l(s)|^2\lesssim \eps^4|\widehat{(r^n)}_l'(0)|^2+\int_0^s\left[|\widehat{(g_+^n)}_l(\theta)|^2
+|\widehat{(\overline{g_-^n})}_l(\theta)|^2+|\widehat{(w^n)}_l(\theta)|^2\right] d\theta.
\end{equation}
Multiplying (\ref{lm: prior eq2}) and (\ref{rn9765}) by $1+\mu_l^2+\ldots+\mu_l^8$, then
summing them up for $l\in{\mathbb Z}$, we obtain
\begin{eqnarray}\label{rn987}
&&\|r^n(\cdot,s)\|_{H^{4}}^2\lesssim\sum_{l=-\infty}^\infty \left(1+\mu_l^2+\ldots+\mu_l^8\right)|\widehat{(r^n)}_l(s)|^2\nonumber\\
&&\qquad\lesssim\sum_{l=-\infty}^\infty\left(\sum_{m=0}^4
\mu_l^{2m}\right)\int_0^s|\widehat{(w^n)}_l(\theta)|^2d\theta
+\eps^4\Big[\|\partial_sr^n(\cdot,0)\|_{H^4}^2 +\|g_+^n\|_{L^\infty([0,\tau];H^4)}\nonumber\\
&&\qquad +\|g_-^n\|_{L^\infty([0,\tau];H^4)}+s\|\partial_sg_+^n\|_{L^\infty([0,\tau];H^4)}
+s\|\partial_sg_-^n\|_{L^\infty([0,\tau];H^4)}\Big] \nonumber\\
&&\qquad +s\Big[\|g_+^n-P_{N_\eps}g_+^n\|_{L^\infty([0,\tau];H^4)}^2
+\|g_-^n-P_{N_\eps}g_-^n\|_{L^\infty([0,\tau];H^4)}^2\Big]\nonumber\\
&&\qquad\lesssim\eps^4+\int_0^s\|w^n(\cdot,\theta)\|_{H^{4}}^2\,d\theta
\lesssim\eps^4+\int_0^s\|r^n(\cdot,\theta)\|_{H^{4}}^2\,d\theta,\qquad 0\le s\le \tau.
\end{eqnarray}
Combining (\ref{rn987}), (\ref{lm: prior eq5}), noticing $r^n(x,0)\equiv 0$ for $x\in\bar{\Omega}$,
and adapting the standard bootstrap argument for the nonlinear wave equation \cite{Tao}, we have that
there exists a positive constant $\tau_1>0$ independent of $\eps$ and $h$ such that
\begin{equation}\label{rn765}
\left\| r^n\right\|_{L^\infty([0,\tau];H^{4})}\lesssim\eps^2.
\end{equation}
Similarly we can obtain
\begin{equation}
\left\|\partial_s r^n\right\|_{L^\infty([0,\tau];H^{3})}\lesssim1,\qquad
\left\|\partial_{ss} r^n\right\|_{L^\infty([0,\tau];H^{2})}\lesssim\frac{1}{\eps^{2}},
\end{equation}
which, together with (\ref{rn765}), immediately imply the desired inequality (\ref{reg r}).
\end{proof}

Combining the above lemmas and defining the local truncation error as
\begin{equation}
\xi^n(x)=\sum_{l=-N/2}^{N/2-1}\widehat{\xi^n_l}\;e^{i\mu_l (x-a)}, \qquad
\dot{\xi}^n(x)=\sum_{l=-N/2}^{N/2-1}\widehat{\dot{\xi}^n_l}\;e^{i\mu_l (x-a)},
\qquad x\in\bar{\Omega},
\end{equation}
where
\begin{equation}\label{loc error}
\left\{
\begin{split}
&\widehat{\xi^n_l}:=\widehat{u}_l(t_{n+1})-\left[\cos(\omega_l\tau)\widehat{u}_l(t_n)+
\frac{\sin(\omega_l\tau)}{\omega_l}\widehat{u}_l'(t_n)-\widehat{\mathcal{G}_l^n}\right],\\
&\widehat{\dot{\xi}^n_l}:=\widehat{u}_l'(t_{n+1})-\left[-\omega_l\sin(\omega_l\tau)\widehat{u}_l(t_n)+
\cos(\omega_l\tau)\widehat{u}_l'(t_n)-\widehat{\dot{\mathcal{G}}_l^n}\right],
\end{split}\right.
\end{equation}
with
{\small \begin{subequations}\label{subeq4}
\begin{eqnarray}
\widehat{\mathcal{G}_l^n}&=&\fe^{i\tau/\eps^2}\left[c_l(\tau)\widehat{(f_+^n)}_l(0)
+d_l(\tau)\widehat{(f_+^n)}_l'(0)\right]
+\fe^{-i\tau/\eps^2}\Big[\overline{c_l}(\tau)\widehat{\left(\overline{f_-^n}\right)}_l(0)
+\overline{d_l}(\tau)\widehat{\left(\overline{f_-^n}\right)}_l'(0)\Big]\nonumber\\
&&+p_l(\tau)\widehat{(g_+^n)}_l(0)
+q_l(\tau)\widehat{(g_+^n)}_l'(0)+\overline{p_l}(\tau)\widehat{\left(\overline{g_-}\right)}_l(0)
+\overline{q_l}(\tau)\widehat{\left(\overline{g_-}\right)}_l'(0),\label{nonlin exact}\\
\widehat{\mathcal{\dot{G}}_l^n}&=&\fe^{i\tau/\eps^2}\left[c_l'(\tau)
+\frac{i}{\eps^2}c_l(\tau)\right]\widehat{(f_+^n)}_l(0)+\fe^{i\tau/\eps^2}
\left[d_l'(\tau)+\frac{i}{\eps^2}d_l(\tau)\right]\widehat{(f_+^n)}_l'(0)\nonumber\\
&&+\fe^{-i\tau/\eps^2}\left[\overline{c_l'}(\tau)
-\frac{i}{\eps^2}\overline{c_l}(\tau)\right]\widehat{\left(\overline{f_-^n}\right)}_l(0)
+\fe^{-i\tau/\eps^2}\left[\overline{d_l'}(\tau)
-\frac{i}{\eps^2}\overline{d_l}(\tau)\right]\widehat{\left(\overline{f_-^n}\right)}_l'(0)\nonumber\\
&&+p_l'(\tau)\widehat{(g_+^n)}_l(0)+q_l'(\tau)\widehat{(g_+^n)}_l'(0)
+\overline{p_l'}(\tau)\widehat{\left(\overline{g_-^n}\right)}_l(0)+\overline{q_l'}
(\tau)\widehat{\left(\overline{g_-^n}\right)}_l'(0)
+\frac{\tau}{2\eps^2}\widehat{(w^n)}_l(\tau).\label{nonlin exact dt}
\end{eqnarray}
\end{subequations}}
Then we have the following estimates for them.

\begin{lemma}[Estimates on $\xi^n$ and $\dot{\xi}^n$]\label{lm:local_error}
Under the assumption (A), when $0<\tau\le \tau_1$, we have two independent estimates for $0<\eps\le 1$
\begin{equation}
\mcE\left(\xi^{n},\dot{\xi}^{n}\right)\lesssim \frac{\tau^6}{\eps^2}+\tau^2\eps^2 \quad \hbox{and}\quad
\mcE\left(\xi^{n},\dot{\xi}^{n}\right)\lesssim \frac{\tau^6}{\eps^6},\qquad
n=0,1,\ldots,\frac{T}{\tau}-1.
  \label{lm:local_error_L result}
\end{equation}
\end{lemma}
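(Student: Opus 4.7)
The plan is to express $\widehat{\xi^n_l}$ and $\widehat{\dot{\xi}^n_l}$ as explicit quadrature errors by subtracting the exact Fourier-coefficient formulas of Lemma \ref{lm: solu} from the nonlinear terms $\widehat{\mathcal{G}^n_l}$ and $\widehat{\dot{\mathcal{G}}^n_l}$, which are the same Gautschi/trapezoidal formulas as $\widetilde{G^n_l}, \widetilde{\dot{G}^n_l}$ from Lemma \ref{lm: rewrite scheme} but evaluated on the exact MDF data. Both $\widehat{\xi^n_l}$ and $\widehat{\dot{\xi}^n_l}$ then split naturally into five pieces, one for each integrand in (\ref{ultnp123})--(\ref{vcf: u cubic}): the oscillatory integrands $e^{\pm i\theta/\eps^2}\widehat{(f^n_\pm)}_l(\theta)$ and $e^{\pm 3i\theta/\eps^2}\widehat{(g^n_\pm)}_l(\theta)$, and the slow residual $\widehat{(w^n)}_l(\theta)$.

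For the $f_\pm$ and $g_\pm$ pieces, the scheme's approximation is a two-point Gautschi-type quadrature: the oscillatory kernels $\sin(\omega_l(\tau-\theta))/(\eps^2\omega_l)$ and $\cos(\omega_l(\tau-\theta))/\eps^2$ are integrated exactly against the linear Taylor polynomials of $\widehat{(f^n_\pm)}_l$ and $\widehat{(g^n_\pm)}_l$ at $\theta=0$. Writing the error via the integral form of the Taylor remainder, each per-mode error is controlled by the kernel times $\tfrac{1}{2}\theta^2\sup_\theta|\widehat{(\partial_{ss}f^n_\pm)}_l|$ (and its $g$-analogue). The key kernel bounds are $|\sin(\omega_l(\tau-\theta))|/(\eps^2\omega_l)\le 1/(\eps^2\omega_l)\le 1$, since $\eps^2\omega_l=\sqrt{1+\mu_l^2\eps^2}\ge 1$, and $|\cos(\omega_l(\tau-\theta))|/\eps^2\le 1/\eps^2$. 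Combined with Lemma \ref{lm: prior}, which controls $\|z_\pm^n\|_{H^{m_0+2}}+\|\partial_s z_\pm^n\|_{H^{m_0+1}}+\|\partial_{ss}z_\pm^n\|_{H^{m_0}}\lesssim 1$ and hence, since $f_\pm,g_\pm$ are cubic polynomials in $z_\pm$, gives $\|\partial_{ss}f^n_\pm\|_{L^\infty H^{m_0}}+\|\partial_{ss}g^n_\pm\|_{L^\infty H^{m_0}}\lesssim 1$, these pieces contribute at most $\tau^6/\eps^2$ to $\mathcal{E}(\xi^n,\dot{\xi}^n)$ in either target estimate.

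For the $w^n$ pieces, the scheme does not approximate the $w$ integral in the $\widehat{\xi^n_l}$ update at all, while in the $\widehat{\dot{\xi}^n_l}$ update it uses the single end-point contribution $-(\tau/2\eps^2)\widehat{(w^n)}_l(\tau)$ from the trapezoidal rule (exploiting $\widehat{(w^n)}_l(0)=0$). Direct triangle-inequality bounds on the relevant integrals yield per-mode errors of order $\tau\sup|\widehat{(w^n)}_l|$ and $(\tau/\eps^2)\sup|\widehat{(w^n)}_l|$, respectively. The first bound $\mathcal{E}\lesssim\tau^6/\eps^2+\tau^2\eps^2$ follows by invoking Lemma \ref{lm: prior}: since $w^n=f(u)-f(u-r^n)$ and $\|r^n\|_{L^\infty H^2}\lesssim\eps^2$, one has $\|w^n\|_{L^\infty H^2}\lesssim\eps^2$, so the $w$ contribution to $\mathcal{E}$ is bounded by $\tau^2\eps^2$. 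For the coarser bound $\mathcal{E}\lesssim\tau^6/\eps^6$, the same decomposition is used but the $w^n$ pieces are absorbed via the uniform bound $\|w^n\|_{L^\infty H^2}\lesssim 1$ coming from assumption (A), with the extra factors of $1/\eps^2$ entering through the $\cos/\eps^2$ kernel in $\widehat{\dot{\xi}^n_l}$ and through the $1/\eps^2$ weight inside $\mathcal{E}$.

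The main obstacle will be the careful bookkeeping of the five-term decomposition, identifying each Gautschi/trapezoidal piece in the scheme with the corresponding oscillatory integral in the VCF, and verifying through Lemma \ref{lm: prior} that the second-order $s$-derivatives of $f^n_\pm$ and $g^n_\pm$, which are nonlinear expressions involving $z_\pm^n, \partial_s z_\pm^n$ and $\partial_{ss} z_\pm^n$, are uniformly bounded in $\eps$ at the required $H^{m_0}$ level. Once this is in place, Parseval's identity and summation against the $H^2$ Sobolev weight complete the passage from the per-mode bounds to the two claimed estimates on $\mathcal{E}(\xi^n,\dot{\xi}^n)$.
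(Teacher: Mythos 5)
Your treatment of the $f_\pm$ and $g_\pm$ pieces, and your entire derivation of the first estimate $\mcE(\xi^n,\dot{\xi}^n)\lesssim \tau^6/\eps^2+\tau^2\eps^2$, follow the paper's argument: align the scheme's kernels with the variation-of-constant kernels via $b_l(\tau-\theta)\fe^{i\tau/\eps^2}=\sin(\omega_l(\tau-\theta))\fe^{i\theta/\eps^2}/(\eps^2\omega_l)$, use the integral Taylor remainder of the two-point Gautschi quadrature together with $\|\partial_{ss}f_\pm^n\|,\|\partial_{ss}g_\pm^n\|\lesssim1$ from Lemma \ref{lm: prior}, bound the untreated $w$-integral by the triangle inequality, and invoke $\|w^n\|_{L^\infty([0,\tau];H^2)}\lesssim\eps^2$. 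That part is sound.

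The second estimate is where your proposal has a genuine gap. You propose to obtain $\mcE(\xi^n,\dot{\xi}^n)\lesssim\tau^6/\eps^6$ by keeping the same direct triangle-inequality bound on the $w^n$ integrals and merely replacing $\|w^n\|\lesssim\eps^2$ with the uniform bound $\|w^n\|\lesssim1$. That cannot work: the direct bound gives a per-mode contribution of order $\tau\sup_\theta|\widehat{(w^n)}_l(\theta)|$ to $\widehat{\xi^n_l}$, hence a contribution of order $\tau^2\|w^n\|_{L^\infty H^2}^2$ to $\|\xi^n\|_{H^2}^2$, which is $O(\tau^2)$ (or $O(\tau^2\eps^4)$ with the sharper bound on $w^n$) --- and neither is $\lesssim\tau^6/\eps^4$ when, say, $\eps=O(1)$ and $\tau\to0$. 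Since the whole purpose of the second estimate is to deliver the $O(\tau^2/\eps^2)$ global bound (quadratic convergence for $\eps=O(1)$), losing the $\tau^6$ power here is fatal. The missing idea is that the scheme's treatment of the $w$-integrals is \emph{exactly} the two-point trapezoidal rule: $\widehat{(w^n)}_l(0)=0$ and $\sin(\omega_l(\tau-\theta))$ vanishes at $\theta=\tau$, so the trapezoidal approximation of the $\xi$-integral is $0$ (matching the scheme, which drops it) and that of the $\dot{\xi}$-integral is $\frac{\tau}{2\eps^2}\widehat{(w^n)}_l(\tau)$ (matching the scheme's correction term). One must therefore invoke the trapezoidal \emph{error} formula, which controls the error by $\int_0^\tau\theta(\tau-\theta)\bigl|\frac{d^2}{d\theta^2}[\hbox{kernel}\cdot\widehat{(w^n)}_l(\theta)]\bigr|\,d\theta$; differentiating the kernel brings down factors $\omega_l\lesssim(1+|\mu_l|)/\eps^2$, and one then needs the bounds $\|w^n\|_{H^4}\lesssim\eps^2$, $\|\partial_sw^n\|_{H^3}\lesssim1$ and $\|\partial_{ss}w^n\|_{H^2}\lesssim\eps^{-2}$ inherited from the $\partial_sr^n$ and $\partial_{ss}r^n$ estimates in Lemma \ref{lm: prior} (which your proposal never uses). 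This yields $\|\xi^n\|_{H^2}^2\lesssim\tau^6/\eps^4$, $\|\partial_x\xi^n\|_{H^2}^2\lesssim\tau^6/\eps^6$, $\|\dot{\xi}^n\|_{H^2}^2\lesssim\tau^6/\eps^8$, and hence the claimed $\tau^6/\eps^6$ for the energy functional.
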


\begin{proof}
Noticing the fact
\begin{equation}\label{fact1 proof}
b_l(\tau-\theta)\fe^{i\tau/\eps^2}=\frac{\sin(\omega_l(\tau-\theta))}{\eps^2\omega_l}
\fe^{i\theta/\eps^2}, \qquad 0\le \theta\le \tau,
\end{equation}
subtracting (\ref{loc error}) from (\ref{subeq1})
and then using the Taylor's expansion,  we get
\begin{eqnarray}\label{lm:local eq0}
\widehat{\xi^n_l}&=&-\int_0^\tau\frac{\sin(\omega_l(\tau-\theta))}{\eps^2\omega_l}
\Big[\theta^2\Big(\fe^{i\theta/\eps^2}\int_0^1\widehat{(f_+^n)}_l''(\theta\rho)(1-\rho)d\rho\nonumber\\
&&\,+\fe^{-i\theta/\eps^2}\int_0^1\widehat{(\overline{f_-^n})}_l''(\theta\rho)(1-\rho)d\rho +\fe^{3i\theta/\eps^2}\int_0^1\widehat{(g_+^n)}_l''(\theta\rho)(1-\rho)d\rho\nonumber\\
&&\,+\fe^{-3i\theta/\eps^2}\int_0^1\widehat{(\overline{g_-^n})}_l''(\theta\rho)(1-
\rho)d\rho\Big)+\widehat{(w^n)}_l(\theta)\Big]
d\theta, \qquad l=-\frac{N}{2},\ldots,\frac{N}{2}-1.
\end{eqnarray}
Using the triangle inequality, we obtain
\begin{align*}
|\widehat{\xi^n_l}|\lesssim&\frac{\tau^2}{\eps^2\omega_l}\int_0^\tau
\bigg[\int_0^1\left|\widehat{(f_+^n)}_l''(\theta\rho)\right|d\rho+
\int_0^1\left|\widehat{(\overline{f_-^n})}_l''(\theta\rho)\right|d\rho+
\int_0^1\left|\widehat{(g_+^n)}_l''(\theta\rho)\right|d\rho\\
&+\int_0^1\left|\widehat{(\overline{g_-^n})}_l''(\theta\rho)\right|d\rho\bigg]d\theta
+\frac{1}{\eps^2\omega_l}\int_0^\tau\left|\widehat{(w^n)}_l(\theta)\right|d\theta,\qquad l=-\frac{N}{2},\ldots,\frac{N}{2}-1.
\end{align*}
Noting $\frac{1}{\eps^2\omega_l}=\frac{1}{\sqrt{1+\eps^2\mu_l^2}}\le 1$ for $l=-\frac{N}{2},\ldots,\frac{N}{2}-1$
and by Lemma \ref{lm: prior}, we get
\begin{align}
\|\xi^n\|_{H^2}^2\lesssim& \tau^6\Big[\|\partial_{ss}f_+^n\|_{L^\infty([0,\tau];H^2)}^2+\|\partial_{ss}f_-^n\|_{L^\infty([0,T];H^2)}^2+
\|\partial_{ss}g_+^n\|_{L^\infty([0,\tau];H^2)}^2\nonumber\\
&+\|\partial_{ss}g_-^n\|_{L^\infty([0,\tau];H^2)}^2\Big]+\tau^2\|w^n\|_{L^\infty([0,\tau];H^2)}^2
\lesssim\tau^6+\tau^2\eps^4,\quad 0<\tau\leq\tau_1.\label{lm:local eq1}
\end{align}
Similarly, noting  $\frac{|\mu_l|}{\eps^2\omega_l}=\frac{|\mu_l|}{\sqrt{1+\eps^2\mu_l^2}}\le\frac{1}{\eps}$
for $l=-\frac{N}{2},\ldots,\frac{N}{2}-1$, we obtain
\begin{align}
\|\partial_x\xi^n\|_{H^2}^2\lesssim\frac{\tau^6}{\eps^2}+\tau^2\eps^2 \quad \hbox{and}\quad \|\dot{\xi}^n\|_{H^2}^2\lesssim\frac{\tau^6}{\eps^4}+\tau^2, \quad 0<\tau\leq\tau_1.
\label{lm:local eq2}
\end{align}
Plugging (\ref{lm:local eq1}) and (\ref{lm:local eq2}) into (\ref{error engery}) with
$e_N^n=\xi^{n}$  and $\dot{e}_N^n=\dot{\xi}^{n}$, we immediately
get the first inequality in (\ref{lm:local_error_L result}).
On the other hand, for $l=-N/2,\ldots,N/2-1$, noticing $\widehat{(w^n)}_l(0)=0$ and
using the error formula of trapezoidal rule for an integral, we get
\begin{equation} \label{int9876}
\left|\int_0^\tau\frac{\sin(\omega_l(\tau-\theta))}{\eps^2\omega_l}
\widehat{(w^n)}_l(\theta)
d\theta\right|\lesssim \int_0^\tau\frac{\theta(\tau-\theta)}{\eps^2\omega_l}\left|\frac{d^2}{d\theta^2}\left[
\sin(\omega_l(\tau-\theta))\widehat{(w^n)}_l(\theta)\right]\right|d\theta.
\end{equation}
Combining (\ref{int9876}) and (\ref{lm:local eq0}), we have
\begin{align*}
|\widehat{\xi^n_l}|\lesssim&\frac{\tau^2}{\eps^2\omega_l}\int_0^\tau\bigg[\int_0^1\left|
\widehat{(f_+^n)}_l''(\theta\rho)\right|d\rho+
\int_0^1\left|\widehat{(\overline{f_-^n})}_l''(\theta\rho)\right|d\rho+\int_0^1\left|
\widehat{(g_+^n)}_l''(\theta\rho)\right|d\rho\\
&+\int_0^1\left|\widehat{(\overline{g_-^n})}_l''(\theta\rho)\right|d\rho+
\omega_l^2\left|\widehat{(w^n)}_l(\theta)\right|
+\omega_l\left|\widehat{(w^n)}_l'(\theta)\right|+\left|\widehat{(w^n)}_l''(\theta)\right|\bigg]d\theta.
\end{align*}
Noting $\omega_l\lesssim(1+|\mu_l|)/\eps^2$ for $l=-\frac{N}{2},\ldots,\frac{N}{2}-1$, we obtain
\begin{align}\label{lm:local eq3}
\|\xi^n\|_{H^2}^2\lesssim&\tau^6\bigg[\|\partial_{ss}f_+^n\|_{L^\infty([0,\tau];H^2)}^2+
\|\partial_{ss}f_-^n\|_{L^\infty([0,\tau];H^2)}^2+
\|\partial_{ss}g_+^n\|_{L^\infty([0,\tau];H^2)}^2\nonumber\\
&+\|\partial_{ss}g_-^n\|_{L^\infty([0,\tau];H^2)}^2+\frac{1}{\eps^8}\|w^n\|_{L^\infty([0,\tau];H^4)}^2
+\frac{1}{\eps^4}\|\partial_sw^n\|_{L^\infty([0,\tau];H^3)}^2\nonumber\\
&+\|\partial_{ss}w^n\|_{L^\infty([0,\tau];H^2)}^2\bigg]\lesssim\frac{\tau^6}{\eps^4},\qquad 0<\tau\leq\tau_1.
\end{align}
Similarly, we can get
\begin{equation} \label{lm:local eq4}
\|\partial_x\xi^n\|_{H^2}^2\lesssim\frac{\tau^6}{\eps^6}, \qquad
\|\dot{\xi}^n\|_{H^2}^2\lesssim\frac{\tau^6}{\eps^8},\qquad 0<\tau\leq\tau_1.
\end{equation}
Again, substituting (\ref{lm:local eq3}) and (\ref{lm:local eq4}) into (\ref{error engery}) with
$e_N^n=\xi^{n}$  and $\dot{e}_N^n=\dot{\xi}^{n}$, we immediately
get the second inequality in (\ref{lm:local_error_L result}).
\end{proof}

For any $\bv\in Y_N$, we denote $v_{-1}=v_{N-1}$ and $v_{n+1}=v_1$ and then
define the difference operators $\delta_x^+\bv\in Y_N$ and $\delta_x^2\bv\in Y_N$ as
\[\delta_x^+\bv_j=\frac{v_{j+1}-v_j}{h}, \qquad \delta_x^2\bv_j=\frac{v_{j+1}-2v_j+v_{j-1}}{h^2},
\qquad j=0,1,\ldots,N.\]
In addition, we define the following norms as
$\|\bv\|_{Y,1}^2=\|\bv\|_{l^2}^2+\|\delta_x^+\bv\|_{l^2}^2$ and
$\|\bv\|_{Y,2}^2=\|\bv\|_{l^2}^2+\|\delta_x^+\bv\|_{l^2}^2+\|\delta_x^2\bv\|_{l^2}^2$
and it is easy to see that
\begin{eqnarray}\label{normequ986}
\qquad \quad \|I_N\bv\|_{H^1}\lesssim \|\bv\|_{Y,1}\lesssim \|I_N\bv\|_{H^1},
\quad \|I_N\bv\|_{H^2}\lesssim \|\bv\|_{Y,2}\lesssim \|I_N\bv\|_{H^2},
\quad \forall \bv\in Y_N.
\end{eqnarray}
Let $\bz_\pm^0\in Y_N$, $\dot{\bz}_\pm^0\in Y_N$, $\bff_\pm^0\in Y_N$, $\dot{\bff}_\pm^0\in Y_N$,
$\bgg_\pm^0\in Y_N$ and $\dot{\bgg}_\pm^0\in Y_N$ with
$z_{\pm,j}^0$, $\dot{z}_{\pm,j}^0$,  $f_{\pm,j}^0$, $\dot{f}_{\pm,j}^0$,
$g_{\pm,j}^0$ and $\dot{g}_{\pm,j}^0$, respectively,
for $j=0,1,\ldots,N$ be defined in (\ref{MTI-FP E}),
and  define the following error functions $\bee_{z_{\pm}}^n\in Y_N$,
$\dot{\bee}_{z_{\pm}}^n\in Y_N$, $\bee_{f_{\pm}}^n\in Y_N$,
$\dot{\bee}_{f_{\pm}}^n\in Y_N$, $\bee_{g_{\pm}}^n\in Y_N$ and
$\dot{\bee}_{g_{\pm}}^n\in Y_N$ as
\begin{equation}\label{err7896}
\left\{
\begin{split}
&e_{z_{\pm},j}^n=z_\pm^n(x_j,0)-z_{\pm,j}^0, \quad \dot{e}_{z_{\pm},j}^n=\partial_s
z_\pm^n(x_j,0)-\dot{z}_{\pm,j}^0,\\
&e_{f_{\pm},j}^n=f_\pm^n(x_j,0)-f_{\pm,j}^0, \quad
\dot{e}_{f_{\pm},j}^n=\partial_s
f_\pm^n(x_j,0)-\dot{f}_{\pm,j}^0,\qquad 0\le j\le N,\\
&e_{g_{\pm},j}^n=g_\pm^n(x_j,0)-g_{\pm,j}^0, \quad \dot{e}_{g_{\pm},j}^n=\partial_s
g_\pm^n(x_j,0)-\dot{g}_{\pm,j}^0.
\end{split}
\right.
\end{equation}

\begin{lemma}[Interpolation error]\label{lm: nonlinear0}
Under the assumption (A) and assume (\ref{MTI sol bound}) holds (which will be proved by
induction later), then we have
\begin{equation}\label{intpl567}
\begin{split}
&\|I_N\bee_{f_{\pm}}^n\|_{H^2}
+\|I_N\bee_{g_{\pm}}^n\|_{H^2}\lesssim \|e_N^n\|_{H^2}+\eps^2\|\dot{e}_N^n\|_{H^2}+h^{m_0},\\
&\|I_N\dot{\bee}_{f_{\pm}}^n\|_{H^2}+\|I_N\dot{\bee}_{g_{\pm}}^n\|_{H^2}\lesssim \frac{1}{\tau}\left(\|e_N^n\|_{H^2}+\eps^2\|\dot{e}_N^n\|_{H^2}+h^{m_0}+\tau^2\right).
\end{split}
\end{equation}
\end{lemma}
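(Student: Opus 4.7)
The strategy is to first control the decomposition errors $\bee_{z_\pm}^n$ and $\dot{\bee}_{z_\pm}^n$ at the discrete level, then linearize the cubic nonlinearities via the Banach algebra property $H^2(\Omega)\hookrightarrow L^\infty(\Omega)$ in 1D. From the definitions in \eqref{MTI-FP E} one has $z_{\pm,j}^0=\frac{1}{2}(u_j^n\mp i\eps^2\dot u_j^n)$ (and its conjugate), whereas \eqref{FSW-i21} gives the exact relation $z_\pm^n(x,0)=\frac{1}{2}(u(x,t_n)\mp i\eps^2\partial_t u(x,t_n))$. Since $I_N$ is linear and differs from $P_N$ by an $O(h^{m_0})$ aliasing error under assumption (A), the triangle inequality together with \eqref{proof: eq0} immediately yields $\|I_N\bee_{z_\pm}^n\|_{H^2}\lesssim \|e_N^n\|_{H^2}+\eps^2\|\dot e_N^n\|_{H^2}+h^{m_0}$.

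Next I would compare $\widetilde{(\dot z_\pm^0)}_l$ in \eqref{MTI-FP E} with the exact Fourier mode $\widehat{(\partial_s z_\pm^n)}_l(0)=\frac{i}{2}\bigl[\mu_l^2\widehat{(z_\pm^n)}_l(0)+\widehat{f_\pm(z_\pm^n(\cdot,0))}_l\bigr]$ coming from \eqref{FSW-i21}, and split their difference into a \emph{consistency} piece $\frac{i}{2}[\mu_l^2-\frac{2}{\tau}\sin(\frac{1}{2}\mu_l^2\tau)]\widehat{(z_\pm^n)}_l(0)$ acting on the smooth exact data, a \emph{stability} piece $-\frac{i}{2}\cdot\frac{2}{\tau}\sin(\frac{1}{2}\mu_l^2\tau)\,\widehat{(e_{z_\pm}^n)}_l$ acting on the discrete error, plus $\frac{i}{2}\widehat{(e_{f_\pm}^n)}_l$. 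Using the interpolation bound $|\mu_l^2-\frac{2}{\tau}\sin(\frac{1}{2}\mu_l^2\tau)|\lesssim\mu_l^4\tau$ (obtained by combining the trivial bound $\le\mu_l^2$ with the Taylor bound $\le\mu_l^6\tau^2/12$) and the $H^{m_0+4}$-regularity of $z_\pm^n(\cdot,0)$ from Lemma \ref{lm: prior}, the consistency piece is $O(\tau)$ in $H^2$. For the stability piece I can only use the multiplier bound $|\frac{2}{\tau}\sin(\frac{1}{2}\mu_l^2\tau)|\le 2/\tau$, because $I_N\bee_{z_\pm}^n$ is controlled only in $H^2$; this is the origin of the $\tau^{-1}$ factor.

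For the nonlinear bounds I would exploit that $f_\pm$ and $g_\pm$ are cubic (see \eqref{fr cubic}--\eqref{g_pm cubic}) and that $\dot f_\pm,\dot g_\pm$ are quadratic in $z_\pm$ and linear in $\partial_s z_\pm$. Writing each nonlinear difference as a telescoping sum in which one factor is of type $(z-z^0)$ or $(\partial_s z-\dot z^0)$ and the remaining two are uniformly bounded, the $H^2$ algebra property combined with Lemma \ref{lm: prior} (for the exact quantities) and the induction hypothesis \eqref{MTI sol bound} (for the numerical quantities) yields $\|I_N\bee_{f_\pm}^n\|_{H^2}+\|I_N\bee_{g_\pm}^n\|_{H^2}\lesssim\|I_N\bee_{z_\pm}^n\|_{H^2}+h^{m_0}$, which with the first step proves the first line of \eqref{intpl567}. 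For $\dot f_\pm,\dot g_\pm$ the telescoping additionally produces a term $\dot z_{\pm,j}^0\cdot(z-z^0)\cdot(\text{bounded})$; one checks $\|I_N\dot{\bz}_\pm^0\|_{H^2}\lesssim\tau^{-1}$ directly from \eqref{MTI-FP E} via the multiplier bound $|\frac{2}{\tau}\sin(\cdot)|\le 2/\tau$, and combined with the second step this produces the $\tau^{-1}$ prefactor in the second line of \eqref{intpl567}.

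The main obstacle is the dichotomous treatment of the initial-velocity filter $\frac{2}{\tau}\sin(\frac{1}{2}\mu_l^2\tau)$: it must simultaneously be read as an $O(\tau)$-accurate approximation of $-\partial_{xx}$ on the smooth exact data (consuming several spatial derivatives supplied by Lemma \ref{lm: prior}) and as an $O(\tau^{-1})$-bounded multiplier when acting on the merely $H^2$-controlled numerical error. Arranging the splitting so that the resulting $\tau^{-1}$ factor appears only in the form advertised by \eqref{intpl567} — without degrading the spatial-regularity count or losing an extra power of $\eps$ — is the delicate bookkeeping of the proof.
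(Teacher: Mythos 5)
Your proposal is correct and follows essentially the same route as the paper: reduce $\bee_{f_\pm}^n,\bee_{g_\pm}^n$ to $\bee_{z_\pm}^n$ via the cubic structure, the $H^2$ algebra/Sobolev bounds and the induction hypothesis \eqref{MTI sol bound}, control $\bee_{z_\pm}^n$ through the linear relation to $u^n,\dot u^n$ plus the $O(h^{m_0})$ aliasing error, and split the $\sin$-filter error exactly as in \eqref{eq:iv-err} into an $O(\tau)$ consistency part on the smooth data (using $H^6$-regularity from Lemma \ref{lm: prior}) and an $O(\tau^{-1})$-multiplier stability part on the numerical error. Your bookkeeping of where the $\tau^{-1}$ enters (including the $\|I_N\dot{\bz}_\pm^0\|_{H^2}\lesssim\tau^{-1}$ coefficient in the telescoping for $\dot f_\pm,\dot g_\pm$) is, if anything, slightly more explicit than the paper's, and leads to the same final bounds.
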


\begin{proof} From (\ref{err7896}), (\ref{normequ986}), (\ref{MTI-FP E}) and (\ref{fg_pm w}), we have
\begin{eqnarray}
\|I_N\bee_{f_{\pm}}^n\|_{H^2}&\lesssim&\|\bee_{f_{\pm}}^n\|_{Y,2}\nonumber\\
&\leq&\int_0^1\left[\left\|\partial_{z_+}f_\pm\left( \bz_+^\theta,\bz_-^n\right)\cdot\bee_{z_{+}}^n
\right\|_{Y,2}+\left\|\partial_{z_-}f_\pm\left(\bz_+^0,\bz_-^\theta\right)
\cdot\bee_{z_{-}}^n\right\|_{Y,2}\right]d\theta,
\label{lm: nonlinear0 eq1}
\end{eqnarray}
where $\bz_\pm^\theta\in Y_N$ and $\bz_\pm^n\in Y_N$ are defined as $\bz_{\pm,j}^\theta=\theta z_\pm^n(x_j,0)+(1-\theta)z_{\pm,j}^0$ and $\bz_\pm^n=z_{\pm,j}^n$, respectively,
for $j=0,1,\ldots,N$ and $0\le \theta\le 1$.
Under the assumption (\ref{MTI sol bound}) and using the  Sobolev's inequality, we get
\begin{eqnarray*}
&&\int_0^1\left\|\partial_{z_+}f_\pm\left( \bz_+^\theta,\bz_-^n\right)\cdot\bee_{z_{+}}^n
\right\|_{Y,2}d\theta\lesssim\left\|\bee_{z_{+}}^n\right\|_{l^\infty}\cdot
\int_0^1\left\|\delta_x^2\partial_{z_+}f_\pm\left( \bz_+^\theta,\bz_-^n\right)\right\|_{l^2}d\theta\nonumber\\
&&\ \ +\left\|\bee_{z_{+}}^n\right\|_{Y,1}\cdot\int_0^1\left\|\delta_x^+\partial_{z_+}f_\pm\left( \bz_+^\theta,\bz_-^n\right)\right\|_{l^\infty}d\theta
+\left\|\bee_{z_{+}}^n\right\|_{Y,2}\cdot\int_0^1\left\|\partial_{z_+}f_\pm\left( \bz_+^\theta,\bz_-^n\right)\right\|_{l^\infty}d\theta\nonumber\\
&&\lesssim\left\|\bee_{z_{+}}^n\right\|_{Y,2}.
\end{eqnarray*}
Similarly, we have
\[\int_0^1\left\|\partial_{z_-}f_\pm\left(z_+^0,\bz_-^\theta\right)\cdot\bee_{z_{-}}^n
\right\|_{Y,2}d\theta\lesssim\left\|\bee_{z_{-}}^n\right\|_{Y,2}.
\]
Plugging the above two inequalities into (\ref{lm: nonlinear0 eq1}), we get
\begin{eqnarray}\label{efpmg123}
\|I_N\bee_{f_{\pm}}^n\|_{H^2}&\lesssim&\left\|\bee_{z_{+}}^n\right\|_{Y,2}+\left\|\bee_{z_{-}}^n\right\|_{Y,2}
\lesssim\left\|I_N\bee_{z_{+}}^n\right\|_{H^2}+\left\|I_N\bee_{z_{-}}^n\right\|_{H^2}\nonumber\\
&\lesssim&\left\|I_Nu(\cdot,t_n)-u_I^n\right\|_{H^2}+\eps^2\left\|I_N\partial_tu(\cdot,t_n)-
\dot{u}_I^n\right\|_{H^2}\nonumber\\
&\lesssim&\left\|e_N^n\right\|_{H^2}+\eps^2\left\|\dot{e}_N^n\right\|_{H^2}+h^{m_0}.
\end{eqnarray}
In addition, combining (\ref{FSW-i21}) and (\ref{MTI-FP E}), we obtain
\begin{align}\label{efd8754}
 \|I_N\dot{\bee}_{f_{\pm}}^n\|_{H^2}\lesssim&\|\dot{\bee}_{f_{\pm}}^n\|_{Y,2}
\lesssim\left\|I_N\bee_{z_{+}}^n\right\|_{H^2}+\left\|I_N\bee_{z_{-}}^n\right\|_{H^2}
+\left\|I_N\dot{\bee}_{z_{+}}^n\right\|_{H^2}+\left\|I_N\dot{\bee}_{z_{-}}^n\right\|_{H^2}\nonumber\\
\lesssim&\left\|e_N^n\right\|_{H^2}+\eps^2\left\|\dot{e}_N^n\right\|_{H^2}+h^{m_0}+\left\|\partial_sz_{+}^n(\cdot,0)-I_N\dot{z}_+^0\right\|_{H^2}\nonumber\\
&+\left\|\partial_sz_{-}^n(\cdot,0)-I_N\dot{z}_-^0\right\|_{H^2}.
\end{align}
Noticing $\partial_sz_{\pm}^n(x,0)=\frac{i}{2}[-\partial_{xx}z_\pm^n(x,0)+f_\pm^n(z_+(x,0),z_-(x,0))]$,
 we have in Fourier space
\begin{align}\label{eq:iv-err}
\ \widehat{(\partial_sz_{\pm}^n)_l}
=&\frac{i}{2}\left[\mu_l^2 \widehat{(z_\pm^n)_l}+\widehat{(f_{\pm}^n)_l}\right]\\
=&\frac{i}{2}\left[2\frac{\sin(\frac12\tau\mu_l^2)}{\tau}\widehat{(z_\pm^n)_l}+
\widehat{(f_{\pm}^n)_l}\right]+\frac{i\mu_l^2}{2}\left(1-\frac{\sin(\frac12\tau\mu_l^2)}{\frac{1}{2}\tau\mu_l^2}\right)\widehat{(z_\pm^n)_l}.
\nonumber
\end{align}
Since the 'sinc' function $\text{sinc}(s)=\frac{\sin s}{s}$ if $s\neq0$ and $\text{sinc}(0)=1$ has the property that
${\text{sinc}}^\prime(0)=0$ and all the derivatives of {\text{sinc}} are bounded, we find
\[\left|1-\frac{\sin\left(\frac{1}{2}\tau\mu_l^2\right)}{\frac{1}{2}\tau\mu_l^2}\right|=\left|\text{sinc}(0)-\text{sinc}
\left(\frac12\tau\mu_l^2\right)\right|
\leq \frac12\tau\mu_l^2 \left\|\text{sinc}^{\prime}(\cdot)\right\|_{L^\infty}.\]
Then from (\ref{MTI-FP E}) and Lemma \ref{lm: prior} we have for small $\tau$,
\begin{equation}\label{lm: nonlinear eq6}
\left\|\partial_sz_{\pm}^n(\cdot,0)-I_N\dot{z}_\pm^0\right\|_{H^2}\lesssim \frac{1}{\tau}
\left(\left\|e_N^n\right\|_{H^2}+\eps^2\left\|\dot{e}_N^n\right\|_{H^2}+h^{m_0}\right)+\tau\|z_{\pm}^n(\cdot,0)\|_{H^6}.
\end{equation}
Plugging (\ref{lm: nonlinear eq6}) into (\ref{efd8754}), we get
$$\|I_N\dot{\bee}_{f_{\pm}}^n\|_{H^2}\lesssim \frac{1}{\tau}\left(\|e_N^n\|_{H^2}+\eps^2\|\dot{e}_N^n\|_{H^2}+h^{m_0}+\tau^2\right).$$
Similarly, we can get the estimate results for $\|I_N\bee_{g_{\pm}}^n\|_{H^2}$ and $\|I_N\dot{\bee}_{g_{\pm}}^n\|_{H^2}$.
Combining all, we immediately get (\ref{intpl567}).
\end{proof}

Defining the errors from the nonlinear terms as
\begin{equation}\label{eta def}
\eta^n(x):=\sum_{l=-N/2}^{N/2-1}\widetilde{\eta_l^n}\;\fe^{i\mu_l(x-a)},\quad \dot{\eta}^n(x):=\sum_{l=-N/2}^{N/2-1}\widetilde{\dot{\eta}_l^n}\;\fe^{i\mu_l(x-a)},
\quad x\in\overline{\Omega},\quad n\ge0,
\end{equation}
where
\begin{equation}\label{eta l def}
\widetilde{\eta_l^n}=\widehat{\mathcal{G}_l^n}-\widetilde{G_l^n},\qquad
\widetilde{\dot{\eta}_l^n}=\widehat{\mathcal{\dot{G}}_l^n}-\widetilde{\dot{G}_l^n},
\qquad l=-\frac{N}{2},\ldots \frac{N}{2}-1,
\end{equation}
then we have

\begin{lemma}[Estimates on $\eta^n$ and $\dot{\eta}^n$]\label{lm: nonlinear}
Under the same assumptions as in Lemma \ref{lm: nonlinear0},
 we have for any $0<\tau\leq\tau_1$,
\begin{equation}\label{lm: nonlinear eq}
\mcE\left(\eta^{n},\dot{\eta}^{n}\right)\lesssim \tau^2\mcE\left(e_N^{n},\dot{e}_N^{n}\right)+\frac{\tau^2 h^{2m_0}}{\eps^2}+\frac{\tau^6}{\eps^2},\quad n=0,1,\ldots,\frac{T}{\tau}-1.
\end{equation}
\end{lemma}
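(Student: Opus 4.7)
The plan is to subtract the scheme's Fourier coefficients (\ref{nonlin mum dt})--(\ref{nonlin num dt}) from their exact counterparts (\ref{nonlin exact})--(\ref{nonlin exact dt}), expressing $\widetilde{\eta^n_l}$ and $\widetilde{\dot\eta^n_l}$ as linear combinations in which fixed quadrature coefficients ($c_l,d_l,p_l,q_l$ and their primed or $\eps^{-2}$-shifted partners, plus the prefactor $\tau/(2\eps^2)$ multiplying $\widehat{(w^n)}_l(\tau)-\widetilde{(w^{n+1})}_l$) multiply Fourier-coefficient differences of the nonlinearities. By Parseval and the definition of $\mcE$, it will suffice to bound $\|\eta^n\|_{H^2}^2/\eps^2$, $\|\partial_x\eta^n\|_{H^2}^2$ and $\eps^2\|\dot\eta^n\|_{H^2}^2$ individually.

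First I would derive uniform-in-$l$ and uniform-in-$\eps$ bounds on the coefficients. From the closed forms in (\ref{cldl765}) and the elementary fact that $|b_l(s)|=|\sin(\omega_l s)|/(\eps^2\omega_l)\le 1/\sqrt{1+\eps^2\mu_l^2}$, one gets $|c_l(\tau)|,|p_l(\tau)|\lesssim \tau/\sqrt{1+\eps^2\mu_l^2}$ and $|d_l(\tau)|,|q_l(\tau)|\lesssim \tau^2/\sqrt{1+\eps^2\mu_l^2}$, so that $|\mu_l c_l(\tau)|\lesssim \tau/\eps$ in the worst case. For the primed coefficients appearing in $\widetilde{\dot G^n_l}$, I would combine $c_l'+ic_l/\eps^2$ and $d_l'+id_l/\eps^2$ and use identities analogous to (\ref{fact1 proof}) to expose cancellations, yielding comparable $O(\tau)$ and $O(\tau^2)$ bounds (with at most one extra factor of $1/\eps^2$ in the $\dot{\eta}^n$ direction, which is absorbed by the $\eps^2$-weight on $\|\dot\eta^n\|_{H^2}^2$ in $\mcE$).

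Second, each Fourier-coefficient difference such as $\widehat{(f_+^n)}_l(0)-\widetilde{(f_+^0)}_l$ splits as $\widehat{((P_N-I_N)f_+^n(\cdot,0))}_l+\widehat{(I_N\bee_{f_+}^n)}_l$. The aliasing piece is $O(h^{m_0})$ in $H^2$ by the standard Fourier projection-interpolation estimate, using $f_+^n(\cdot,0)\in H^{m_0+4}$, a fact that follows from (\ref{FSW-i21}), assumption (A), and Lemma~\ref{lm: prior}. The interpolation piece is controlled by Lemma~\ref{lm: nonlinear0}, giving $\|I_N\bee_{f_\pm}^n\|_{H^2}+\|I_N\bee_{g_\pm}^n\|_{H^2}\lesssim \|e_N^n\|_{H^2}+\eps^2\|\dot e_N^n\|_{H^2}+h^{m_0}\lesssim \eps\sqrt{\mcE(e_N^n,\dot e_N^n)}+h^{m_0}$, and analogously $\|I_N\dot{\bee}_{f_\pm}^n\|_{H^2}\lesssim \tau^{-1}(\eps\sqrt{\mcE(e_N^n,\dot e_N^n)}+h^{m_0}+\tau^2)$. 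Multiplying by $|c_l|\lesssim\tau$ (respectively $|d_l|\lesssim\tau^2$) and squaring then yields contributions of the form $\tau^2\eps^2\mcE(e_N^n,\dot e_N^n)+\tau^2 h^{2m_0}+\tau^6$, which after division by $\eps^2$ produce exactly the claimed $\tau^2\mcE(e_N^n,\dot e_N^n)+\tau^2 h^{2m_0}/\eps^2+\tau^6/\eps^2$ for the $\|\eta^n\|_{H^2}^2/\eps^2$ part, and similarly for the gradient and $\eps^2$-weighted velocity parts thanks to the extra $1/\sqrt{1+\eps^2\mu_l^2}$ factors.

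The main obstacle is the residual term $\widehat{(w^n)}_l(\tau)-\widetilde{(w^{n+1})}_l$ in $\widetilde{\dot\eta^n_l}$, since its $\tau/(2\eps^2)$ prefactor means that the corresponding $H^2$-norm has to be driven down by two extra powers of $\eps$ to retain the right scaling. Here I would exploit $w^n(x,\tau)=f(u(x,t_{n+1}))-f(u(x,t_{n+1})-r^n(x,\tau))$ together with the analogous identity $w_j^{n+1}=f(u_j^{n+1})-f(u_j^{n+1}-r_j^{n+1})$, Taylor-expanding around the base values. Since $\|r^n(\cdot,\tau)\|_{H^2}\lesssim\eps^2$ by Lemma~\ref{lm: prior} and the same bound propagates to the numerical $r_j^{n+1}$ under the induction hypothesis (\ref{MTI sol bound}), the factor $r$ supplies the needed $\eps^2$, while the remaining discrepancy splits into a piece controlled by $\|e_N^n\|_{H^2}+\eps^2\|\dot e_N^n\|_{H^2}\lesssim \eps\sqrt{\mcE(e_N^n,\dot e_N^n)}$ (propagation of initial error through the MDF solver) and a step-$n$ MDF truncation piece of size $\tau^3/\eps$ from Lemma~\ref{lm:local_error}. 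Squaring and multiplying by $\tau^2/\eps^4$ reproduces the three target terms, completing the bound $\mcE(\eta^n,\dot\eta^n)\lesssim \tau^2\mcE(e_N^n,\dot e_N^n)+\tau^2 h^{2m_0}/\eps^2+\tau^6/\eps^2$.
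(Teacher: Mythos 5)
Your proposal follows essentially the same route as the paper's proof: subtract (\ref{nonlin mum dt})--(\ref{nonlin num dt}) from (\ref{nonlin exact})--(\ref{nonlin exact dt}), bound the quadrature weights by $|c_l(\tau)|+|p_l(\tau)|\lesssim\tau/\sqrt{1+\mu_l^2\eps^2}$ and $|d_l(\tau)|+|q_l(\tau)|\lesssim\tau^2/\sqrt{1+\mu_l^2\eps^2}$ (with the extra $\mu_l$ and $1/\eps^2$ factors absorbed by the weights in $\mcE$), and control the nonlinearity differences via Lemma \ref{lm: nonlinear0} together with an $O(h^{m_0})$ aliasing term. The only place you go beyond the printed argument is the explicit treatment of the residual $\frac{\tau}{2\eps^2}\bigl[\widehat{(w^n)}_l(\tau)-\widetilde{(w^{n+1})}_l\bigr]$ in $\dot{\eta}^n$, which the paper dispatches with ``Similarly''; your idea of extracting the $O(\eps^2)$ smallness from $r$ and invoking the induction hypothesis (\ref{MTI sol bound}) is the right mechanism, though the final exponent bookkeeping for the $r$-discrepancy (the claimed $\tau^3/\eps$ piece) is left loose.
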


\begin{proof} Denote
\begin{equation}\label{efg965}
\left\{\begin{split}
&e_{f_\pm}^n(x)=f_\pm^n(x)-(I_N\bff_\pm^0)(x), \quad \dot{e}_{f_\pm}^n(x)=\partial_s f_\pm^n(x)-(I_N\dot{\bff}_\pm^0)(x), \\
&e_{g_\pm}^n(x)=f_\pm^n(x)-(I_N\bff_\pm^0)(x),\quad
\dot{e}_{g_\pm}^n(x)=\partial_s g_\pm^n(x)-(I_N\dot{\bgg}_\pm^0)(x), \quad x\in\Omega.
\end{split}
\right.
\end{equation}
For $l=-N/2,\ldots,N/2-1$, from  (\ref{efg965}),
(\ref{eta l def}) and (\ref{subeq4}),
using the triangle inequality, we have
\begin{eqnarray}\label{lm: nonlinear eq5}
|\eta^n_l|&\leq&|c_l(\tau)|\left[\left|\widehat{(e_{f_+}^n)}_l\right|+
\left|\widehat{(\overline{e_{f_-}^n})}_l\right|\right]
+|d_l(\tau)|\left[\left|\widehat{(\dot{e}_{f_+}^n)}_l\right|+
\left|\widehat{(\overline{\dot{e}_{f_-}^n})}_l\right|\right]\nonumber\\
&&+|p_l(\tau)|\left[\left|\widehat{(e_{g_+}^n)}_l\right|
+\left|\widehat{(\overline{e_{g_-}^n})}_l\right|\right]
+|q_l(\tau)|\left[\left|\widehat{(\dot{e}_{g_+}^n)}_l\right|+
\left|\widehat{(\overline{\dot{e}_{g_-}^n})}_l\right|\right].
\end{eqnarray}
From (\ref{cldl765}) directly, we have
\begin{equation}\label{add0}
\begin{split}
&|c_l(\tau)|+|p_l(\tau)|\lesssim\frac{\tau}{\sqrt{1+\mu_l^2\eps^2}}\lesssim \tau,\qquad
\mu_l(|c_l(\tau)|+|p_l(\tau)|)\lesssim\frac{\tau}{\eps},\\
&|d_l(\tau)|+|q_l(\tau)|\lesssim\frac{\tau^2}{\sqrt{1+\mu_l^2\eps^2}}\lesssim \tau^2,\qquad
\mu_l(|d_l(\tau)|+|q_l(\tau)|)\lesssim\frac{\tau^2}{\eps}.
\end{split}
\end{equation}
Inserting (\ref{add0}) into (\ref{lm: nonlinear eq5}) and using the Cauchy's inequality,
we obtain
\begin{eqnarray}\label{lm: nonlinear eq1}
\|\eta^n\|_{H^2}^2&\lesssim&\tau^2\bigg[\left\|P_Ne_{f_+}^n\right\|_{H^2}^2+
\left\|P_Ne_{f_-}^n\right\|_{H^2}^2
+\left\|P_Ne_{g_+}^n\right\|_{H^2}^2+\left\|P_Ne_{g_-}^n\right\|_{H^2}^2\bigg]\nonumber\\
&&+\tau^4\bigg[\left\|P_N\dot{e}_{f_+}^n\right\|_{H^2}^2+\left\|P_N\dot{e}_{f_-}^n\right\|_{H^2}^2
+\left\|P_N\dot{e}_{g_+}^n\right\|_{H^2}^2+\left\|P_N\dot{e}_{g_-}^n\right\|_{H^2}^2\bigg]\nonumber\\
&\lesssim&\tau^2\bigg[\left\|I_Ne_{f_+}^n\right\|_{H^2}^2+
\left\|I_Ne_{f_-}^n\right\|_{H^2}^2
+\left\|I_Ne_{g_+}^n\right\|_{H^2}^2+\left\|I_Ne_{g_-}^n\right\|_{H^2}^2\bigg]\nonumber\\
&&+\tau^4\bigg[\left\|I_N\dot{e}_{f_+}^n\right\|_{H^2}^2+\left\|I_N\dot{e}_{f_-}^n\right\|_{H^2}^2
+\left\|I_N\dot{e}_{g_+}^n\right\|_{H^2}^2+\left\|I_N\dot{e}_{g_-}^n\right\|_{H^2}^2\bigg]+\tau^2h^{2m_0}\nonumber\\
&\lesssim&\tau^2\|e_N^n\|_{H^2}^2+\tau^2\eps^4\|\dot{e}_N^n\|_{H^2}^2+\tau^2h^{2m_0}+\tau^6,
\end{eqnarray}
and
\begin{equation}\label{lm: nonlinear eq3}
\|\partial_x\eta^n\|_{H^2}^2\lesssim\frac{\tau^2}{\eps^2}\|e_N^n\|_{H^2}^2+\tau^2\eps^2\|\dot{e}_N^n\|_{H^2}^2
+\frac{\tau^2h^{2m_0}}{\eps^2}+\frac{\tau^6}{\eps^2}.
\end{equation}
Similarly,
\begin{equation}\label{lm: nonlinear eq2}
\eps^2\|\dot{\eta}^n\|_{H^2}^2
\lesssim\frac{\tau^2}{\eps^2}\|e_N^n\|_{H^2}^2+\tau^2\eps^2\|\dot{e}_N^n\|_{H^2}^2
+\frac{\tau^2h^{2m_0}}{\eps^2}+\frac{\tau^6}{\eps^2}.
\end{equation}
Combining (\ref{lm: nonlinear eq1}), (\ref{lm: nonlinear eq2}) and (\ref{error engery})
 we immediately obtain (\ref{lm: nonlinear eq}).
\end{proof}

\medskip

{\it Proof of Theorem \ref{main thm}.} The proof will be proceeded by the method of mathematical induction
and the energy method. For $n=0$, from the initial data in the MTI-FP (\ref{MTI-FP S})-(\ref{MTI-FP E})
method and noticing the assumption (A), we have
\[\|e^0\|_{H^2}+\eps^2\|\dot{e}^0\|_{H^2}=\|\phi_1-I_N\phi_1\|_{H^2}+\|\phi_2-I_N\phi_2\|_{H^2}\lesssim h^{m_0+2}\lesssim h^{m_0}.\]
In addition, using the triangle inequality, we know that there exists $h_1>0$ independent of $\eps$ such that
for $0<h\leq h_1$ and $\tau>0$
\[\|u_I^0\|_{H^2}\leq\|\phi_1\|_{H^2}+\|e^0\|_{H^2}\leq C_0+1,\quad\  \|\dot{u}_I^0\|_{H^2}\leq\frac{\|\phi_2\|_{H^2}}{\eps^2}+\|\dot{e}^0\|_{H^2}\leq\frac{C_0+1}{\eps^2}.\]
Thus (\ref{MTI error bound1})-(\ref{MTI sol bound}) are valid for $n=0$.
Now  we assume that (\ref{MTI error bound1})-(\ref{MTI sol bound}) are valid for $0\leq n\leq m-1\leq T/\tau-1$.
Substracting (\ref{subeq1}) from (\ref{observ of scheme}), we have
\begin{subequations}\label{lm: error prog eq1}
\begin{eqnarray}
&&\qquad\quad \widehat{(e^{n+1})}_l=\widehat{u}_l(t_{n+1})-\widetilde{(u_I^{n+1})}_l=
\cos(\omega_l\tau)\widehat{(e^{n})}_l+
\frac{\sin(\omega_l\tau)}{\omega_l}\widehat{(\dot{e}^{n})}_l+\widehat{\xi_l^n}-\widetilde{\eta^n_l},\\
&&\qquad\quad \widehat{(\dot{e}^{n+1})}_l=\widehat{u}_l'(t_{n+1})-\widetilde{(\dot{u}_I^{n+1})}_l=
-\omega_l\sin(\omega_l\tau)\widehat{(e^{n})}_l+\cos(\omega_l\tau)\widehat{(\dot{e}^{n})}_l
+\widehat{\dot{\xi}_l^n}-\widetilde{\dot{\eta}^n_l}.
\end{eqnarray}
\end{subequations}
Using the Cauchy's inequality, we obtain
\begin{subequations}
\begin{align}
&\left|\widehat{(e^{n+1})}_l\right|^2\leq(1+\tau)\left|\cos(\omega_l\tau)\widehat{(e^{n})}_l
+\frac{\sin(\omega_l\tau)}{\omega_l}\widehat{(\dot{e}^{n})}_l\right|^2+\frac{1+\tau}
{\tau}\left|\widehat{\xi_l^n}-\widetilde{\eta^n_l}\right|^2,
\label{lm:error prog eq2}\\
&\left|\widehat{(\dot{e}^{n+1})}_l\right|^2\leq(1+\tau)\left|\cos(\omega_l\tau)\widehat{(\dot{e}^{n})}_l
-\omega_l\sin(\omega_l\tau)\widehat{(e^{n})}_l\right|^2
+\frac{1+\tau}{\tau}\left|\widehat{\dot{\xi}_l^n}-\widetilde{\dot{\eta}^n_l}\right|^2.\label{lm:error prog eq3}
\end{align}
\end{subequations}
Multiplying (\ref{lm:error prog eq2}) and (\ref{lm:error prog eq3}) by
 $(\mu_l^2+\frac{1}{\eps^2})(1+\mu_l^2+\mu_l^4)$ and
$\eps^2(1+\mu_l^2+\mu_l^4)$, respectively, and then  summing them up for $l=-N/2,\ldots, N/2-1$,
we obtain
\[\mathcal{E}(e^{n+1}_N,\dot{e}^{n+1}_N)\leq (1+\tau)\mathcal{E}(e^{n}_N,\dot{e}^{n}_N)+\frac{1+\tau}{\tau}
\mathcal{E}(\xi^n-\eta^n,\dot{\xi}^n-\dot{\eta}^n).\]
Using the Cauchy's inequality, we get
\begin{equation}\mathcal{E}(e^{n+1}_N,\dot{e}^{n+1}_N)-\mathcal{E}(e^{n}_N,
\dot{e}^{n}_N)\lesssim\tau\mathcal{E}(e^{n}_N,\dot{e}^{n}_N)
+\frac{1+\tau}{\tau}\left[\mathcal{E}(\xi^n,\dot{\xi}^n)
+\mathcal{E}(\eta^n,\dot{\eta}^n)\right].\label{proof: eq1}
\end{equation}
Inserting  (\ref{lm: nonlinear eq}) and the second inequality in (\ref{lm:local_error_L result})
 into (\ref{proof: eq1}), we get
\[\mcE\left(e^{n+1}_N,\dot{e}^{n+1}_N\right)-\mcE\left(e^{n}_N,\dot{e}^{n}_N\right)
\lesssim\tau\mcE\left(e^{n}_N,\dot{e}^{n}_N\right)+\frac{\tau h^{2m_0}}{\eps^2}+\frac{\tau^5}{\eps^6}.\]
Summing the above inequality for $0\le n\le m-1$ and then applying the discrete Gronwall's inequality, we have
\begin{equation}\label{emn865}
\mcE\left(e^{m}_N,\dot{e}^{m}_N\right)\lesssim\frac{h^{2m_0}}{\eps^2}+\frac{\tau^4}{\eps^6}.
\end{equation}
Similarly, by using the first inequality in  (\ref{lm:local_error_L result}), we obtain
\begin{equation}\label{emn835}
\mcE\left(e^{m}_N,\dot{e}^{m}_N\right)\lesssim \frac{h^{2m_0}}{\eps^2}+\frac{\tau^4}{\eps^2}+\eps^2.
\end{equation}
Combining (\ref{error engery}), (\ref{proof: eq0}), (\ref{emn865}) and (\ref{emn835}),
we get that (\ref{MTI error bound1}) is valid for $n=m$, which implies \cite{Deg,Jin}
\[\|e^m\|_{H^2}+\eps^2\|\dot{e}^m\|_{H^2}\leq h^{m_0}+\tau.\]
Using the triangle inequality, we obtain that these exist $h_2>0$ and $\tau_2>0$ independent of
$\eps$ such that
\[\begin{split}
&\|u_I^m\|_{H^2}\leq \|u(\cdot,t_m)\|_{H^2}+\|\dot{e}^m\|_{H^2}\leq C_0+1,\\
&\|\dot{u}_I^m\|_{H^2}\leq \|\partial_tu(\cdot,t_m)\|_{H^2}+\|\dot{e}^m\|_{H^2}\leq \frac{C_0+1}{\eps^2},
\qquad 0<h\le h_2,\quad 0<\tau\le \tau_2.
\end{split}
\]
Thus (\ref{MTI sol bound}) is also valid for $n=m$. Then the proof is completed
by chosen $\tau_0=\min\{\tau_1,\tau_2\}$ and $h_0=\min\{h_1,h_2\}$.
$\Box$

\begin{remark}
Here we emphasize that Theorem \ref{main thm} holds in 2D and 3D and the above approach can be directly extended to the higher dimensions
without any extra efforts.
The only thing needs to be taken care of is the Sobolev inequality used in Lemma \ref{lm: nonlinear0} in 2D and 3D,
\begin{equation}
\begin{split}
&\|u\|_{L^\infty(\Omega)}\leq C\|u\|_{H^2(\Omega)},\quad \text{in 2D and 3D},\\
&\|u\|_{W^{1,p}(\Omega)}\leq C\|u\|_{H^2(\Omega)},\quad 1<p<6 \text{ in 2D and 3D},
\end{split}
\end{equation}
where $\Omega$ is a bounded domain in 2D or 3D.  By using assumption
\eqref{MTI sol bound}, Lemma \ref{lm: nonlinear0} will still hold in 2D and 3D.
\eqref{MTI sol bound} and error bounds can be proved by induction since our scheme
is explicit.
\end{remark}

Under a weaker assumption of the regularity
\[({\rm B}) \quad u\in C^1\left([0,T];H_p^{m_0+3}(\Omega)\right),
\quad \left\|u\right\|_{L^\infty([0,T]; H^{m_0+3})}+\eps^2
\left\|\partial_t u\right\|_{L^\infty([0,T]; H^{m_0+3})}\lesssim 1,
\]
with $m_0\geq2,$ we can have the $H^1$-error estimates of the MTI-FP method by a very similar proof with all the $H^2$-norms in above changed into $H^1$-norms.
\begin{theorem}\label{main thm 2}
Under the assumption (B), there exist two constants $0<h_0\leq1$ and $0<\tau_0\leq1$
sufficiently small and independent of $\eps$ such that, for any $0<\eps\leq1$, when
 $0<h\leq h_0$ and $0<\tau\leq\tau_0$, we have
\begin{align}
&\left\|e^n\right\|_{H^1}+\eps^2\left\|\dot{e}^n\right\|_{H^1}\lesssim h^{m_0}+\frac{\tau^2}{\eps^2},
\qquad \left\|e^n\right\|_{H^1}+\eps^2\left\|\dot{e}^n\right\|_{H^1}\lesssim h^{m_0}+\tau^2+\eps^2, \\
&\left\|u^n\right\|_{l^\infty}\leq C_0+1,\quad \left\|\dot{u}^n\right\|_{l^\infty}\leq \frac{C_0+1}{\eps^2},\qquad 0\leq n\leq\frac{T}{\tau}.
\end{align}
\end{theorem}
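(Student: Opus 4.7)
The plan is to mirror the proof of Theorem \ref{main thm} line-by-line, systematically replacing every $H^2$-norm by an $H^1$-norm and every $H^2$-based quantity by its $H^1$ analogue. The natural replacement for the energy functional is
\begin{equation*}
\mcE\left(e_N^n,\dot{e}_N^n\right):=\eps^2\left\|\dot{e}_N^n\right\|_{H^1}^2
+\left\|\partial_xe_N^n\right\|_{H^1}^2+\frac{1}{\eps^2}\left\|e_N^n\right\|_{H^1}^2,
\end{equation*}
which loses exactly one derivative relative to \eqref{error engery}. Correspondingly, Lemma \ref{lm: prior} should be re-stated as $\|z_\pm^n\|_{L^\infty([0,\tau];H^{m_0+1})}+\|\partial_sz_\pm^n\|_{L^\infty([0,\tau];H^{m_0})}+\|\partial_{ss}z_\pm^n\|_{L^\infty([0,\tau];H^{m_0-1})}\lesssim 1$ and $\|r^n\|_{L^\infty([0,\tau];H^3)}+\eps^2\|\partial_s r^n\|_{L^\infty([0,\tau];H^2)}+\eps^4\|\partial_{ss}r^n\|_{L^\infty([0,\tau];H^1)}\lesssim \eps^2$; these weaker bounds are exactly what assumption (B) makes available via the same bootstrap/energy argument used for Lemma \ref{lm: prior}.

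Next I would reproduce the three intermediate estimates. The truncation error analysis in Lemma \ref{lm:local_error} carries over verbatim: the key multipliers $\tfrac{1}{\eps^2\omega_l}\le 1$ and $\tfrac{|\mu_l|}{\eps^2\omega_l}\le\tfrac{1}{\eps}$ are unchanged, so summing $|\widehat{\xi^n_l}|^2$ and $|\widehat{\dot\xi^n_l}|^2$ against $(1+\mu_l^2)$ and $(\mu_l^2+\mu_l^4)$ (rather than up to $\mu_l^6$) yields $\mcE(\xi^n,\dot\xi^n)\lesssim \tau^6/\eps^2+\tau^2\eps^2$ and $\mcE(\xi^n,\dot\xi^n)\lesssim \tau^6/\eps^6$, using precisely the reduced regularity of (B). For the nonlinear error (Lemma \ref{lm: nonlinear}) the multiplier bounds \eqref{add0} are unchanged, and the interpolation-error lemma (Lemma \ref{lm: nonlinear0}) retains its form provided the Sobolev inequality $\|u\|_{L^\infty(\Omega)}\lesssim \|u\|_{H^1(\Omega)}$ valid in 1D is used in place of the 1D embedding $H^2\hookrightarrow L^\infty$; this gives $\|I_N\bee_{f_\pm}^n\|_{H^1}\lesssim \|e_N^n\|_{H^1}+\eps^2\|\dot{e}_N^n\|_{H^1}+h^{m_0}$ and the analogous time-derivative bound, exactly with $H^1$ replacing $H^2$.

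With these three ingredients in hand, the induction step is identical to \eqref{proof: eq1} and the lines following it: one derives $\mcE(e_N^{n+1},\dot e_N^{n+1})-\mcE(e_N^n,\dot e_N^n)\lesssim \tau\mcE(e_N^n,\dot e_N^n)+\tau h^{2m_0}/\eps^2+\tau^5/\eps^6$ (or the second form with $\eps^2$), and the discrete Gronwall inequality yields the two claimed error bounds. The $L^\infty$-bound $\|u^n\|_{l^\infty}\le C_0+1$ (now stated at the grid rather than in $H^2$) follows from the $H^1$-error bound, the triangle inequality, and the 1D Sobolev embedding $\|I_N\bu^n\|_{L^\infty}\lesssim \|I_N\bu^n\|_{H^1}$ together with $\|I_N\bu^n\|_{l^\infty}=\|I_N\bu^n\|_{L^\infty}$, closing the induction in the same way as in Theorem \ref{main thm}.

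The main obstacle, and the only place where genuine care is required, is the interpolation-error lemma: the 1D Sobolev embedding $H^1\hookrightarrow L^\infty$ is essential to control the products $\partial_{z_\pm}f_\pm\cdot \bee_{z_\pm}^n$ and their first differences at the $H^1$-level, whereas the rest of the argument is a mechanical transcription. This is also why the $H^1$ result is stated only in the 1D setting: in 2D/3D one does not have $H^1\hookrightarrow L^\infty$, so the same nonlinear estimate would fail and the theorem could not be proved by merely shifting every norm down by one derivative.
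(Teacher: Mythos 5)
Your proposal is correct and matches the paper's approach exactly: the paper justifies Theorem \ref{main thm 2} only by the remark that it follows ``by a very similar proof with all the $H^2$-norms \ldots changed into $H^1$-norms,'' and your systematic downshift of the energy functional, of the a priori bounds in Lemma \ref{lm: prior}, of the local truncation and nonlinear estimates, and your use of the 1D embedding $H^1\hookrightarrow L^\infty$ to close the induction and obtain the $l^\infty$ bound is precisely that argument made explicit. The only minor quibble is your closing claim that the result ``could not be proved'' in 2D/3D by this route: the paper's subsequent remark asserts that it does still hold there, but only under the CFL-type condition $\tau\lesssim\rho_d(h)$ obtained via an inverse inequality to recover the $l^\infty$ control of the numerical solution.
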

\begin{remark}
In 1D case, Theorem \ref{main thm 2} holds without any CFL-type conditions. However for higher dimensional cases, i.e. $d=2$ or $d=3$, due to the use of
inverse inequality to provide the $l^\infty$ control of the numerical solution \cite{Bao1}, 
one has to impose the technical condition
\begin{equation*}
\tau\lesssim \rho_d(h),\qquad \hbox{with} \quad \rho_d(h)=\left\{\begin{array}{ll}   1/|\ln h|,\quad &d=2,\\ \sqrt{h},\quad &d=3.\\
\end{array}\right.
\end{equation*}
If the solution of the KG is smooth enough, we can always turn to Theorem \ref{main thm} and such CFL type conditions are
 unnecessary.
\end{remark}
\begin{remark}
If the periodic boundary condition for the KG equation (\ref{KG trun}) is replaced by the
homogeneous Dirichlet or Neumann boundary condition, then the MTI-FP method and its error estimates
are still valid provided that the Fourier basis is replaced by sine or cosine basis.
\end{remark}

\begin{remark}
If the cubic nonlinearity in the KG equation (\ref{KG}) is replaced by a general gauge invariant nonlinearity,
the general MTI-FP method can be designed similar to those in  \cite{Zhao}.
\end{remark}

\section{Numerical results}\label{sec: num results}
In this section, we present numerical results of the MTI-FP method to confirm
our error estimates. In order to do so, we take $d=1$ and $f(u)=|u|^2u$ in
(\ref{KG}) and choose the initial data as
\[\phi_1(x)=(1+i)\fe^{-x^2/2},\qquad \phi_2(x)=\frac{3\fe^{-x^2/2}}{2\eps^2},\qquad x\in{\mathbb R}.\]
The problem is solved on a bounded interval $\Omega=[-16,16]$, i.e. $b=-a=16$,
which is large enough to guarantee that the periodic boundary condition does not
 introduce a significant aliasing error relative to the original problem.
To quantify the error, we introduce two error functions:
\begin{equation*}
\fe^{\tau,h}_\eps(T):=\left\|u(\cdot,T=M\tau)-u^M_I\right\|_{H^2},\qquad
\fe^{\tau,h}_{\infty}(T):=\max_{\eps}\left\{\fe^{\tau,h}_\eps(T)\right\}.
\end{equation*}
Since the analytical solution to this problem is not available,
so the `exact' solution is obtained numerically by the MTI-FP method
(\ref{MTI-FP S})-(\ref{MTI-FP E}) with very fine mesh $h=1/32$ and
time step $\tau=5\times 10^{-6}$.
Tab. \ref{tb: MTIFP spa} shows the spatial error of MTI-FP method at $T=1$
under different $\eps$ and $h$ with a very small time step $\tau=5\times 10^{-6}$
such that the discretization error in time is negligible.
Tab. \ref{tb: MTIFP tem} shows the temporal error of MTI-FP method at $T=1$
under different $\eps$ and $\tau$ with a small mesh size $h=1/8$ such that
the discretization error in space is negligible.

\begin{table}[t!]
\tabcolsep 0pt
\caption{Spatial error analysis: $\fe^{\tau,h}_\eps(T=1)$ with  $\tau=5\times 10^{-6}$
for different $\eps$ and $h$.}\label{tb: MTIFP spa}
\begin{center}
\begin{tabular*}{1\textwidth}{@{\extracolsep{\fill}}lllll}
\hline
$\fe^{\tau,h}_\eps(T)$         & $h_0=1$	      &$h_0/2$	        &$h_0/4$	        &$h_0/8$	\\
\hline
$\eps_0=0.5$	               &1.65E\,--\,1	  &3.60E\,--\,3   	&1.03E\,--\,6	    &7.34E\,--\,11\\
$\eps_0/2^1$	               &2.65E\,--\,1	  &9.70E\,--\,3	    &9.07E\,--\,7	    &5.03E\,--\,11\\
$\eps_0/2^2$	               &9.02E\,--\,1	  &1.34E\,--\,2	    &1.73E\,--\,7	    &4.60E\,--\,11\\
$\eps_0/2^3$	               &1.13E+0	          &2.98E\,--\,2	    &2.25E\,--\,7	    &4.10E\,--\,11\\
$\eps_0/2^4$	               &4.67E\,--\,1	  &3.14E\,--\,2	    &1.79E\,--\,7	    &4.78E\,--\,11\\
$\eps_0/2^5$	               &7.41E\,--\,1	  &2.73E\,--\,2 	&2.50E\,--\,7	    &5.49E\,--\,11\\
$\eps_0/2^7$	               &7.41E\,--\,1	  &2.62E\,--\,2	    &2.12E\,--\,7	    &4.96E\,--\,11\\
$\eps_0/2^{9}$	               &6.33E\,--\,1	  &3.57E\,--\,2	    &1.92E\,--\,7	    &5.04E\,--\,11\\
$\eps_0/2^{11}$	               &9.19E\,--\,1	  &2.44E\,--\,2	    &2.19E\,--\,7	    &6.18E\,--\,11\\
$\eps_0/2^{13}$	               &1.18E+0	          &2.38E\,--\,2	    &2.59E\,--\,7	    &5.86E\,--\,11\\
\hline
\end{tabular*}
\end{center}
\end{table}

\begin{table}[t!]
\tabcolsep 0pt
\caption{Temporal error analysis: $\fe^{\tau,h}_\eps(T=1)$ a
nd $\fe^{\tau,h}_{\infty}(T=1)$ with $h=1/8$ for different $\eps$ and $\tau$.}\label{tb: MTIFP tem}
\begin{center}
\begin{tabular*}{1\textwidth}{@{\extracolsep{\fill}}llllllll}
\hline
$\fe^{\tau,h}_\eps(T)$      & $\tau_0=0.2$	&$\tau_0/2^2$	
&$\tau_0/2^4$	&$\tau_0/2^6$	 &$\tau_0/2^8$  & $\tau_0/2^{10}$  &$\tau_0/2^{12}$\\
\hline
$\eps_0=0.5$	            &7.17E-1	    &5.72E-2	    &3.50E-3	
&2.14E-4	     &1.33E-5	    &8.14E-7	       &3.67E-8\\
rate                        &---	            &1.82	        &2.02	
&2.01	         &2.00	        &2.01	           &2.20\\ \hline
$\eps_0/2^1$	            &5.40E-1	    &1.58E-1	    &1.12E-2	
&6.74E-4	     &4.15E-5	    &2.54E-6	       &1.18E-7\\
rate                        &---	            &0.89	        &1.91	
&2.02	         &2.01	        &2.01	           &2.21\\ \hline
$\eps_0/2^2$	            &5.23E-1	    &1.47E-1	    &3.70E-2	
&2.70E-3	     &1.62E-4	    &9.87E-6	       &4.62E-7\\
rate                        &---	            &0.91	        &0.99	
&1.90	         &2.02	        &2.01	           &2.20\\ \hline
$\eps_0/2^3$	            &6.30E-1	    &6.28E-2	    &4.13E-2	
 &8.90E-3	     &6.51E-4	    &3.92E-5	       &1.82E-6\\
rate                        &---	            &1.66	        &0.30	
 &1.11	         &1.89	        &2.02	           &2.21\\ \hline
$\eps_0/2^4$	            &6.11E-1	    &3.00E-2	    &1.16E-2	
&1.05E-2	     &2.20E-3	    &1.60E-4	       &7.41E-6\\
rate                        &---	            &2.17	        &0.68	
   &0.07	         &1.13	        &1.89	           &2.21\\ \hline
$\eps_0/2^5$	            &6.17E-1	    &3.01E-2	    &2.70E-3	
 &2.90E-3	     &2.80E-3	    &5.26E-4	       &2.98E-5\\
rate                        &---	            &2.17	        &1.75	
 &-0.04	         &0.02	        &1.17	           &2.07\\ \hline
$\eps_0/2^7$	            &6.16E-1	    &2.90E-2	    &1.80E-3	
 &2.37E-4	     &1.37E-4	    &1.96E-4	       &1.91E-4\\
rate                        &---	            &2.20	        &2.01	
     &1.46	         &0.40	        &-0.26	           &0.02\\ \hline
$\eps_0/2^{9}$	            &6.13E-1	    &2.90E-2	    &1.69E-3	
  &1.12E-4	     &1.09E-5	    &5.51E-6	       &1.69E-6\\
rate                        &---	            &2.20	        &2.03	
 &1.96	         &1.68	        &0.49	           &0.85\\ \hline
$\eps_0/2^{11}$	            &6.16E-1	    &2.90E-2	    &1.69E-3	
  &1.05E-4	     &6.95E-6	    &9.97E-7	       &3.38E-7\\
rate                        &---	            &2.20	        &2.03	
    &2.00	         &1.96	        &1.40	           &0.78\\ \hline
$\eps_0/2^{13}$	            &6.20E-1	    &2.92E-2	    &1.69E-3		
 &1.06E-4	     &6.61E-6	    &3.94E-7	       &2.38E-8\\
rate                        &---	            &2.20	        &2.04
  &2.00	         &2.00	        &2.03	           &2.02\\ \hline \hline
$\fe^{\tau,h}_{\infty}(T)$  &7.17E-1	    &1.58E-1	    &4.13E-2	
 &1.05E-2	     &2.80E-3	    &5.26E-4	       &1.91E-4\\
rate                        &---	            &1.09	        &0.97	
    &0.99	         &1.00	        &1.15	           &0.74\\
\hline
\end{tabular*}
\end{center}
\end{table}

From Tabs. \ref{tb: MTIFP spa}-\ref{tb: MTIFP tem} and extensive additional results not shown here for
brevity, we can draw the following observations:

(i) The MTI-FP method is spectrally accurate in space, which is uniformly for $0<\eps\le1$
(cf. Tab. \ref{tb: MTIFP spa}).

(ii) The MTI-FP method converges uniformly and linearly
in time for $\eps\in(0,\tau]$ (cf. last row in Tab. \ref{tb: MTIFP tem}).
In addition, for each fixed $\eps=\eps_0>0$, when $\tau$ is small enough,
it converges quadratically in time (cf. each row in the upper triangle of Tab. \ref{tb: MTIFP tem});
and for each fixed $\eps$ small enough, when $\tau$ satisfies $0<\eps<\tau$,
it also converges quadratically in time (cf. each row in the lower triangle of Tab. \ref{tb: MTIFP tem}).

(iii) The MTI-FP method is uniformly accurate for all $\eps\in(0,1]$ under the mesh strategy
(or $\eps$-scalability) $\tau=O(1)$ and $h=O(1)$.

\section{Conclusions}\label{sec: conlusion}
A MTI-FP method was proposed and analyzed for solving the KG
equation with a dimensionless parameter $0<\eps\leq1$ which is inversely
proportional to the speed of light.
The key ideas for designing the MTI-FP method are based on
(i) carrying out a multiscale decomposition by frequency
at each time step with proper choice of transmission conditions between time steps,
and (ii) adapting the Fourier spectral for spatial discretization and
the EWI for integrating second-order highly oscillating ODEs.
Rigorous error bounds for the MTI-FP method were established,
which imply that the MTI-FP method converges uniformly and optimally
in space with spectral convergence rate, and uniformly  in time with linear convergence rate
for $\eps\in(0,1]$ and optimally with quadratic convergence rate in the regimes when either $\eps= O(1)$ or $0<\eps\leq\tau$. Numerical results confirmed these error bounds and suggested that they are sharp.

\end{document}